\def\MR#1{}
\theoremstyle{plain}
\newtheorem{thm}{Theorem}[section]
\newtheorem{prop}[thm]{Proposition}
\newtheorem{cor}[thm]{Corollary}
\newtheorem{lem}[thm]{Lemma}
\newtheorem{quest}[thm]{Question}
\theoremstyle{definition}
\newtheorem{ex}[thm]{Example}
\newcommand{\Pyr}{{\mathrm{Pyr}}}
\DeclareMathOperator{\conv}{conv}
\DeclareMathOperator{\ehr}{ehr}
\newcommand{\R}{{\mathbb{R}}}
\newcommand{\Z}{{\mathbb{Z}}}
\newcommand{\rleft}{\mathopen{}\mathclose\bgroup\left}
\newcommand{\rright}{\aftergroup\egroup\right}
\newcommand{\vect}[1]{{\boldsymbol{\mathbf{#1}}}}
\newcommand{\orig}{{\vect{0}}}
\DeclareMathOperator{\Vol}{Vol}
\def\eb{\mathbf{e}}
\def\coloneqq{\mathrel{\mathop:}=}
\title{Universal inequalities in Ehrhart Theory}
\author[G.\,Balletti]{Gabriele Balletti}
\address[G.\,Balletti]{Department of Mathematics\\Stockholm University\\SE-$106$\ $91$\
  Stockholm\\Sweden}
\curraddr{}
\email{balletti@math.su.se}
\thanks{}
\author[A.\,Higashitani]{Akihiro Higashitani}
\address[A.\,Higashitani]{Department of Mathematics\\Kyoto Sangyo University\\ 603-8555, Kyoto\\Japan}
\curraddr{}
\email{ahigashi@cc.kyoto-su.ac.jp}
\thanks{}
\subjclass[2010]{Primary: 52B20; Secondary: 52B12}
\keywords{Lattice polytope, Ehrhart polynomial, $h^*$-polynomial, universal inequalities, spanning polytopes. }
\begin{document}

\begin{abstract}
In this paper, we show the existence of \emph{universal inequalities} for the $h^*$-vector of a lattice polytope $P$, that is, we show that there are relations among the coefficients of the $h^*$-polynomial which are independent of both the dimension and the degree of $P$. 
More precisely, we prove that the coefficients $h_1^*$ and $h_2^*$ of the $h^*$-vector $(h_0^*,h_1^*,\ldots,h_d^*)$ of a lattice polytope of any degree satisfy Scott's inequality if $h_3^*=0$. 
\end{abstract}

\maketitle{}

\section{Introduction}\label{sec:introduction}

\subsection{Background and main result}\label{ssec:background}
Let $N \cong \Z^d$ be a lattice of rank $d$ and let $N_\R \coloneqq N \otimes_\Z \R \cong \R^d$. 
We say that a convex polytope $P \subset N_\R$ is a \emph{lattice polytope} if its vertices are all elements of $N$.  
Two lattice polytopes $P,Q\subset N_\R$ are said to be \emph{unimodularly equivalent} if there exists an affine lattice automorphism $\varphi\in \mathrm{GL}_d(\Z)\ltimes\Z^d$ of $N$ such that $\varphi_\R(P)=Q$. 
In what follows, unless stated otherwise, we consider lattice polytopes as being defined up to unimodular equivalence. 

Given a lattice polytope $P$ of dimension $d$, one can associate an enumerative function $k \mapsto |kP \cap N|$, counting the number of lattice points in the $k$-th dilation $kP$ of $P$, where $k$ is a positive integer. 
Ehrhart~\cite{Ehr62} proved that this function is interpolated by a polynomial, i.e. that there exists a polynomial $\ehr_P$, called the \emph{Ehrhart polynomial} of $P$, such that $\ehr_P(k)=|kP \cap N|$ for any $k \geq 1$. 
Moreover, its generating function is known to be the rational function
\[\sum_{k \geq 0} \ehr_P(k) t^k =\frac{\sum_{i \geq 0}h_i^*t^i}{(1-t)^{d+1}},\]
where $h_i^*=0$ for any $i \geq d+1$. We call the sequence of integers $(h_0^*,h_1^*,\ldots,h_d^*)$ appearing in the numerator of the generating function the \emph{$h^*$-vector} (or \emph{$\delta$-vector}) of $P$, 
and the polynomial $h_P^*(t)=h^*_0 + h^*_1 t + \cdots + h^*_s t^s$ the \emph{$h^*$-polynomial} (or \emph{$\delta$-polynomial}) of $P$, where $s$ is the degree of this polynomial. We define $s$ to be the \emph{degree} of $P$. 
In the following, we will sometimes use the notation $h_i^*=h^*_i(P)$ when we want to specify the polytope $P$. 
The constant term $h^*_0$ of the $h^*$-polynomial is always $1$. Moreover, the coefficients are all nonnegative integers (see \cite{Sta80}). 
The $h^*$-polynomial (the $h^*$-vector) is an important and meaningful invariant for $P$, and despite (in fixed dimension) it is equivalent to the Ehrhart polynomial of $P$, it is often preferred to $\ehr_P$ as its coefficients have a well-understood combinatorial interpretation. 
For more details, see Section \ref{sec:universal_inequalities} (Proposition~\ref{prop:properties}).

The following question is one of the most important unsolved problems in Ehrhart Theory. 
\begin{quest}
\label{question}
Can one characterize the polynomials with nonnegative integer coefficients that are the $h^*$-polynomial of some lattice polytope?
\end{quest}

This question has a trivial answer in dimension one, while an answer in dimension two has been given by Scott \cite{Sco76}. 

\begin{thm}[Scott \cite{Sco76} (1976)]
\label{thm:Scott}
A polynomial $h^*(t) = 1 + h^*_1 t + h^*_2 t^2 \in \Z_{\geq 0}[t]$ is the $h^*$-polynomial of a lattice polytope of dimension two if and only if it satisfies one of the following conditions: 
\begin{enumerate}
\item \label{scott:1} $h^*_2 = 0$;
\item \label{scott:2} $h_2^* \leq h^*_1 \leq 3h^*_2 + 3$;
\item \label{scott:3} $h^*_1 = 7$ and $h^*_2=1$.
\end{enumerate}
Moreover, the case $\eqref{scott:3}$ is satisfied only by the $h^*$-polynomial of the triangle $\conv(\{\orig,3\eb_1,3\eb_2\})$, where $\eb_1$ and $\eb_2$ are a basis for the ambient lattice and $\orig$ is its origin. 
\end{thm}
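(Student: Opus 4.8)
The plan is to translate the statement into the classical lattice-point language and then split it into a realizability part and an inequality part. For a lattice polygon $P\subset N_\R$, write $b=b(P)$ for the number of lattice points on $\partial P$ and $i=i(P)$ for the number in its interior. By the combinatorial meaning of the first $h^*$-coefficients (Proposition~\ref{prop:properties}: $h^*_0=1$ and $h^*_1=|P\cap N|-(d+1)$) and by Ehrhart--Macdonald reciprocity (which gives $h^*_d=|\topint(P)\cap N|$; here $d=2$), a lattice polygon has $h^*_0=1$, $h^*_1=b+i-3$ and $h^*_2=i$ --- equivalently, the last equality follows from Pick's formula. Hence Scott's conditions read: \eqref{scott:1} is ``$i=0$''; \eqref{scott:2} is ``$3\le b\le 2i+6$'', where the left-hand inequality $h^*_2\le h^*_1$ is automatic because $\dim P=2$ forces $b\ge 3$; and \eqref{scott:3} is ``$i=1$ and $b=9$''. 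So the theorem is equivalent to: the pairs $(b,i)$ realized by lattice polygons are exactly those with $i=0$ (and $b\ge3$), those with $i\ge1$ and $3\le b\le 2i+6$, and $(9,1)$; and $\conv(\{\orig,3\eb_1,3\eb_2\})$ is the only lattice polygon realizing $(9,1)$.

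For realizability I would write down explicit families. If $i=0$, then for every $b\ge3$ the triangle $\conv(\{\orig,(b-2)\eb_1,\eb_2\})$ has no interior lattice point and exactly $b$ boundary lattice points. If $i\ge1$, begin with the triangle $\conv(\{\orig,\eb_1,2\eb_1+(2i+1)\eb_2\})$, which by Pick's formula has exactly $i$ interior lattice points and $b=3$, and then repeatedly glue a unimodular triangle onto a boundary edge: since such an edge is primitive and the attached triangle has empty interior, this move increases $b$ by exactly $1$ and leaves $i$ unchanged, and one checks convexity can be maintained up to $b=2i+6$, so every intermediate value is attained. Finally $\conv(\{\orig,3\eb_1,3\eb_2\})$ has $i=1$ and $b=9$, realizing \eqref{scott:3}.

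The substance is the ``only if'' direction --- Scott's inequality: \emph{if $i\ge1$ then $b\le 2i+6$, with the sole exception $P\cong\conv(\{\orig,3\eb_1,3\eb_2\})$, where $(b,i)=(9,1)$}. I would prove it by induction on $i$, using the interior hull $Q\coloneqq\conv(\topint(P)\cap N)\subseteq\topint(P)$, a lattice polytope with $|Q\cap N|=i$ and $\dim Q\le2$. If $\dim Q=2$, then $Q$ has $i_Q<i$ interior lattice points and $b_Q=i-i_Q$ boundary ones, and a counting lemma bounds the number of boundary lattice points of $P$ lying beyond $Q$ --- controlled by $b_Q$ and the ``caps'' of $P$ over the edges of $Q$ --- which, combined with the inductive estimate for $Q$ (or a direct one when $i_Q=0$), gives $b\le2i+6$. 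If $\dim Q\le1$, all interior lattice points of $P$ are collinear; after a unimodular transformation they are $\eb_1+\eb_2,\dots,i\eb_1+\eb_2$ on the line $\{y=1\}$, and convexity plus the absence of any further interior lattice point forces $P$ to occupy essentially the three lattice rows $\{y=0,1,2\}$ with at most two boundary lattice points on $\{y=1\}$; a direct count of the lattice points of $P$ on and between these rows then yields $b\le2i+6$, with the extremal polygons explicitly described, and $\conv(\{\orig,3\eb_1,3\eb_2\})$ --- whose interior hull is the single point $\eb_1+\eb_2$ --- emerges as the unique one exceeding the bound, with $b=9$.

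The step I expect to be the main obstacle is the equality analysis that isolates this exceptional polygon: one must show the slack in the counting estimates never accumulates into further sporadic cases. This concentrates in the innermost base case $\dim Q=0$, i.e.\ $P$ has exactly one interior lattice point; I would isolate this as a lemma --- such $P$ has $b\le9$, with equality only for $\conv(\{\orig,3\eb_1,3\eb_2\})$, equivalently $\vol(P)\le9/2$, attained uniquely there --- and prove it by the lattice-width argument: translate the interior lattice point to $\orig$, show the lattice width of $P$ is at most $3$, and enumerate the finitely many polygons that can occur, comparing their boundary-point counts. Feeding this lemma in as the base case of the induction, and tracking precisely when the inequalities in the inductive step are tight, both establishes $b\le2i+6$ in general and confirms uniqueness of the triangle $\conv(\{\orig,3\eb_1,3\eb_2\})$.
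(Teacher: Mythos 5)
The paper does not prove this statement: Theorem~\ref{thm:Scott} is quoted verbatim from Scott \cite{Sco76} and used as a black box (its only role here is to feed into Theorem~\ref{thm:generalizedScott} and Proposition~\ref{prop:deduce}), so there is no internal proof to compare yours against. Judged on its own terms, your translation into lattice-point data is correct and complete: with $b$ boundary and $i$ interior points one has $h^*_1=b+i-3$ and $h^*_2=i$, so \eqref{scott:1}--\eqref{scott:3} become $i=0$; $3\le b\le 2i+6$; and $(b,i)=(9,1)$, with $h^*_2\le h^*_1$ indeed automatic in dimension two (this is exactly the paper's remark that the inequality $h^*_2\le h^*_1$ comes from $\eqref{Ehr_ineq}$ and disappears in Theorem~\ref{thm:generalizedScott}). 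Your architecture for the ``only if'' direction --- induction on $i$ via the interior hull $Q=\conv(P^\circ\cap N)$, splitting on $\dim Q$, with the exceptional triangle emerging from the one-interior-point base case --- is the known ``onion-skin'' proof (Haase--Schicho, Koelman), which is a legitimate and complete route; Scott's own argument instead normalizes $P$ inside a horizontal lattice strip and counts directly. So the plan is sound.

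That said, as written the proposal defers essentially all of the mathematical content to unproved assertions. The ``counting lemma'' bounding the boundary points of $P$ in terms of data of $Q$ when $\dim Q=2$ \emph{is} the theorem in this approach and is only named, not stated or proved. In the collinear case, the claim that $P$ ``occupies essentially the three lattice rows $y=0,1,2$'' needs the genuine structure result that a polygon whose interior lattice points all lie on a line is contained in a width-two strip about that line, up to the single exception $\conv(\{\orig,3\eb_1,3\eb_2\})$ (which reaches $y=3$); and the base case $i=1$, $b\le 9$ with uniqueness of the extremal triangle requires the lattice-width bound and a finite enumeration you do not carry out. On the realizability side, the gluing argument is plausible but the assertion that convexity ``can be maintained up to $b=2i+6$'' is exactly what must be checked; explicit families (e.g.\ trapezoids between the rows $y=0$ and $y=2$, as in Henk--Tagami \cite{HT09}) settle this more cleanly. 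None of these gaps points to a wrong idea, but each would have to be filled for this to be a proof rather than a correct table of contents for one.
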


All the cases for $d \geq 3$ of Question~\ref{question} are widely open.

In \cite{Tre10}, Treutlein generalizes the necessary conditions of Theorem \ref{thm:Scott} to lattice polytopes of degree at most two as follows. 
\begin{thm}[{\cite[Theorem~2]{Tre10}}]
\label{thm:generalizedScott}
Let $P$ be a lattice polytope of degree at most two. Then its $h^*$-polynomial $h^*_P(t) = 1 + h^*_1 t + h^*_2 t^2$  satisfies one of the following conditions: 
\begin{enumerate}
\item \label{genscott:1} $h^*_2 = 0$;
\item \label{genscott:2} $h^*_1 \leq 3h^*_2 + 3$;
\item \label{genscott:3} $h^*_1 = 7$ and $h^*_2=1$. 
\end{enumerate}
Moreover, the case $\eqref{genscott:3}$ is satisfied only by the $h^*$-polynomial of lattice simplices obtained via multiple lattice pyramid constructions (see $\eqref{eq:lattice_pyramid}$) 
over the triangle $\conv(\{\orig,3\eb_1,3\eb_2\})$, where $\eb_1$ and $\eb_2$ are part of a basis for the ambient lattice and $\orig$ is its origin.
\end{thm}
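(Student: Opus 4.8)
The plan is to reduce the statement to dimension two, where it becomes Scott's Theorem~\ref{thm:Scott}, using the structure theory of lattice polytopes of bounded degree.

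First I would pass to the ``non-pyramid core'' of $P$. The lattice pyramid $\Pyr(P)\coloneqq\conv\big((P\times\{0\})\cup\{(\orig,1)\}\big)$ satisfies $h^*_{\Pyr(P)}=h^*_P$, raises the dimension by one, and leaves the degree unchanged, so by iterating it suffices to establish the trichotomy for those lattice polytopes of degree at most two that are \emph{not} lattice pyramids; this also forces case~\eqref{genscott:3} to be attained precisely by the iterated pyramids over whichever base polytope realizes it. If $P$ has degree at most one then $h^*_2(P)=0$ and we are already in case~\eqref{genscott:1}, so assume henceforth that $P$ is a non-pyramid of degree exactly two.

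Next I would invoke the Cayley decomposition theory of Batyrev--Nill (and Haase--Nill--Payne): there is an explicit bound $d_0$ such that any lattice polytope of degree two of dimension exceeding $d_0$ is a Cayley polytope $\mathrm{Cayley}(Q_0,\dots,Q_k)$ over lattice polytopes $Q_0,\dots,Q_k$ of dimension at most two lying in a common lattice (for instance, $\Delta_2\times\Delta_k=\mathrm{Cayley}(\Delta_2,\dots,\Delta_2)$ is such a polytope, of unbounded dimension but degree two). This leaves two families: (a) the finitely many non-pyramids of dimension at most $d_0$, and (b) the Cayley polytopes of polygons (and lower-dimensional pieces), including the pyramids over $\conv(\orig,3\eb_1,3\eb_2)$. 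For (a), dimension two is exactly Scott's theorem, with case~\eqref{scott:3} there realized only by $\conv(\orig,3\eb_1,3\eb_2)$, while for $3\le\dim P\le d_0$ one classifies the finitely many relevant polytopes or checks by hand that each lands in~\eqref{genscott:1} or~\eqref{genscott:2}. For (b) one reads off $h^*_1$ and $h^*_2$ of $\mathrm{Cayley}(Q_0,\dots,Q_k)$ from the lattice-point data and the normalized volumes of the $Q_i$ — counting the lattice points of the Cayley polytope layer by layer gives $h^*_1=\sum_i|Q_i\cap N|-(\dim P+1)$, while $h^*_2$ is governed by the genuinely two-dimensional $Q_i$ — and feeds the planar Scott bounds for those $Q_i$ into an elementary count to obtain $h^*_1\le 3h^*_2+3$, with the value $(h^*_1,h^*_2)=(7,1)$ surviving only when the configuration collapses to a pyramid over $\conv(\orig,3\eb_1,3\eb_2)$.

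The main obstacle is family (b): Cayley polytopes have a subtler Ehrhart theory than products, so squeezing out the \emph{sharp} inequality $h^*_1\le 3h^*_2+3$ and — above all — certifying that no configuration other than the pyramids over Scott's triangle attains $(h^*_1,h^*_2)=(7,1)$ calls for careful bookkeeping of how the $h^*$-data of the $Q_i$ aggregate, rather than a single estimate. A secondary technical point is to make the bound $d_0$ from the Cayley decomposition small and explicit, so that the finite verification in family~(a) is genuinely tractable.
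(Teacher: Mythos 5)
First, note that the paper does not prove this statement at all: Theorem~\ref{thm:generalizedScott} is quoted verbatim from Treutlein \cite{Tre10} (his Theorem~2), so there is no in-paper proof to match your argument against. Your outline does follow the general strategy of the literature (reduce modulo lattice pyramids, use a Cayley-type structure theorem for degree-two polytopes, and feed Scott's planar theorem into the pieces), but as written it has concrete gaps that prevent it from being a proof.

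The most clear-cut problem is in your family~(a): there are \emph{not} finitely many non-pyramid lattice polytopes of degree at most two in each dimension $3\le\dim P\le d_0$. Already in dimension two every lattice polygon with an interior lattice point has degree two, and in dimension three one gets infinite families such as $Q\times[0,1]$ for $Q$ a polygon with interior points, or simplices like $\Delta_{2,b}$ of Lemma~\ref{lem:uncle_johannes_lemma} with $h^*=1+bt^2$ for arbitrary $b$. So ``classify the finitely many relevant polytopes or check by hand'' is not an available move; the low-dimensional non-pyramid case requires a genuine argument, and in Treutlein's actual proof this (together with the Cayley analysis) is where the real work lies. Second, the two steps you yourself flag as the ``main obstacle'' --- extracting the sharp bound $h^*_1\le 3h^*_2+3$ from the Cayley decomposition, and certifying that $(h^*_1,h^*_2)=(7,1)$ forces an iterated pyramid over $\conv(\{\orig,3\eb_1,3\eb_2\})$ --- are precisely the content of the theorem and are left undone; the pyramid reduction at the start does not by itself ``force'' the rigidity statement, since a priori a non-pyramid in dimension $3,\dots,d_0$ or in the Cayley family could also realize $(7,1)$ and must be excluded by explicit analysis. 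Finally, the Cayley structure theorem you invoke should be stated with care (the constituents $Q_i$ live in a common ambient space of dimension $\dim P-k$ and are not automatically at most two-dimensional); in the degree-two case the correct precise statement is Treutlein's Theorem~1, and quoting it accurately is necessary before the ``layer by layer'' count of $h^*_1$ and $h^*_2$ can be justified.
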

Henk--Tagami \cite[Proposition~1.10]{HT09} proved that those are sufficient conditions, i.e. that any polynomial of degree two in $\Z_{\geq 0}[t]$ satisfying one of the conditions \eqref{genscott:1}--\eqref{genscott:3} of Theorem~\ref{thm:generalizedScott} is the $h^*$-polynomial of some lattice polytope of degree two.
Note that the inequality $h_2^* \leq h_1^*$ of Theorem~\ref{thm:Scott}, coming from $\eqref{Ehr_ineq}$, does not appear in Theorem~\ref{thm:generalizedScott}, as the dimension of the polytope may be greater than two.

We call the inequalities $\eqref{genscott:1}$--$\eqref{genscott:3}$ in Theorem \ref{thm:generalizedScott} \emph{Scott's inequality}. 
Our main result is the following further generalization of Theorem~\ref{thm:generalizedScott} to polynomials $h^*(t) = 1 + h^*_1 t + h^*_2 t^2 + \cdots \in \Z_{\geq 0}[t]$ of any degree satisfying $h^*_3=0$. 
\begin{thm}[Main Theorem]\label{thm:main}
Let $P$ be a lattice polytope whose $h^*$-polynomial $h_P^*(t) = 1 + h^*_1 t + h^*_2 t^2 + \cdots \in \Z_{\geq 0}[t]$ satisfies $h^*_3=0$. 
Then $h_P^*(t)$ satisfies Scott's inequality, i.e. it satisfies one of the following conditions: 
\begin{enumerate}
\item \label{main:1} $h^*_2 = 0$;
\item \label{main:2} $h^*_1 \leq 3h^*_2 + 3$;
\item \label{main:3} $h^*_1 = 7$ and $h^*_2=1$.
\end{enumerate}
Moreover, the case $\eqref{main:3}$ is satisfied only by the $h^*$-polynomial of lattice simplices obtained via multiple lattice pyramid constructions over the triangle $\conv(\{\orig,3\eb_1,3\eb_2\})$ and then considered as lattice polytopes with respect to a refined lattice. 
\end{thm}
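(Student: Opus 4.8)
The plan is to reduce the theorem to Treutlein's Theorem~\ref{thm:generalizedScott} by a case analysis on the degree $s$ of $P$, the work being concentrated in the case $s \ge 4$.

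\textbf{Easy cases.} If $s \le 2$, then Theorem~\ref{thm:generalizedScott} applies to $P$ and gives all of \eqref{main:1}--\eqref{main:3} together with the extremal description (with no lattice refinement needed). If $s = 3$, then $h^*_3 \ne 0$ by the definition of the degree, contradicting the hypothesis, so this case does not occur. From now on assume $s \ge 4$; if moreover $h^*_1 = 0$, then $|P \cap N| = d+1$ and \eqref{main:2} reads $0 \le 3h^*_2+3$, so we may also assume $h^*_1 \ge 1$.

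\textbf{Reduction to a spanning polytope.} Since $P$ is full-dimensional, the points of $P \cap N$ affinely generate a sublattice $N_0 \subseteq N$ of finite index, and by construction $P \cap N = P \cap N_0$, so in particular $h^*_1(P,N_0) = h^*_1(P,N) = h^*_1$; moreover $(P,N_0)$ is a \emph{spanning} lattice polytope and $(P,N)$ is obtained from it by refining the lattice. I would use the coefficientwise monotonicity of $h^*$-vectors under such a refinement --- which follows from Stanley's monotonicity theorem applied to the finite inclusion of Ehrhart rings, both Cohen--Macaulay of dimension $d+1$ --- to get $h^*_i(P,N_0) \le h^*_i(P,N)$ for all $i$; in particular $h^*_3(P,N_0) = 0$. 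Now comes the crucial rigidity of spanning polytopes: by a Hibi-type inequality known for spanning lattice polytopes (Hofscheier--Katth\"{a}n--Nill), every spanning lattice polytope $Q$ satisfies $h^*_1(Q) \le h^*_i(Q)$ for $1 \le i \le \deg Q - 1$. If $\deg(P,N_0) \ge 4$, applying this with $i = 3$ gives $h^*_1 = h^*_1(P,N_0) \le h^*_3(P,N_0) = 0$, contradicting $h^*_1 \ge 1$; and $h^*_3(P,N_0) = 0$ excludes $\deg(P,N_0) = 3$, so $\deg(P,N_0) \le 2$ and Theorem~\ref{thm:generalizedScott} applies to $(P,N_0)$.

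\textbf{Transferring the inequality to $N$.} Put $c := h^*_2(P,N_0) \le h^*_2(P,N) =: c'$. If $c' = 0$ then $(P,N)$ satisfies \eqref{main:1}; so assume $c' \ge 1$. If $c \ge 1$, then $(P,N_0)$ satisfies \eqref{genscott:2} or \eqref{genscott:3}: in the former case $h^*_1 \le 3c + 3 \le 3c' + 3$, giving \eqref{main:2}; in the latter $(h^*_1,c) = (7,1)$, and either $c' \ge 2$ (then $7 \le 3c'+3$, so \eqref{main:2}) or $c' = 1$, so $(P,N)$ is in case \eqref{main:3} --- and, since $(P,N_0) = \Pyr^k(\conv\{\orig,3\eb_1,3\eb_2\})$ for some $k$ by Theorem~\ref{thm:generalizedScott} while $(P,N)$ refines it, this is exactly the family in the statement. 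The remaining possibility is $c = 0$, i.e. $\deg(P,N_0) \le 1$; then, by the classification of lattice polytopes of degree at most one, $(P,N_0)$ is an iterated lattice pyramid over a lattice segment or over a planar lattice polygon $\conv\{\orig,\eb_1,a\eb_2,\eb_1+b\eb_2\}$.

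\textbf{The main obstacle.} Everything thus reduces to controlling $h^*_2$ for lattice refinements of Lawrence prisms: one must show that if $(P,N_0)$ is such a prism, $N_0 \subseteq N$ with $P \cap N = P \cap N_0$, and $(P,N)$ has degree $\ge 4$ with $h^*_2(P,N) \ge 1$ and $h^*_3(P,N) = 0$, then $h^*_1 \le 3 h^*_2(P,N) + 3$, or $(h^*_1, h^*_2(P,N)) = (7,1)$ with $(P,N)$ again a refinement of some $\Pyr^k(\conv\{\orig,3\eb_1,3\eb_2\})$. I expect this to be the heart of the proof. My approach would be to exploit the very explicit combinatorics of Lawrence prisms and of those lattice refinements that leave $P \cap N$ unchanged --- note that in dimension two there is nothing to prove, since an empty lattice triangle is unimodular, so a planar Lawrence prism admits no proper such refinement --- and, peeling off the pyramid directions, reduce the statement to a finite computation in the spirit of Scott's original argument; the hypothesis $h^*_3 = 0$ is precisely what keeps this computation finite.
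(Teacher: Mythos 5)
Your reduction is essentially the paper's own (Section~\ref{sec:spanning}): pass to the spanning polytope $(P,N_0)$, use monotonicity and the Hofscheier--Katth\"an--Nill result to force $\deg(P,N_0)\le 2$, apply Theorem~\ref{thm:generalizedScott} when the degree is $2$, and fall back on the Batyrev--Nill classification when the degree is at most $1$. Two small inaccuracies there: you invoke a Hibi-type inequality $h^*_1(Q)\le h^*_i(Q)$ for spanning $Q$, which is stronger than the cited result and unnecessary --- the nonvanishing $\widetilde{h^*_i}\ge 1$ for $i\le\tilde s$ already forces $\tilde s\le 2$ once $\widetilde{h^*_3}=0$; and your statement of the degree-$\le 1$ classification omits the exceptional simplices $\Pyr^{d-2}(\conv(\{\orig,2\eb_1,2\eb_2\}))$, a harmless gap since those have $h^*_1=3$ and satisfy \eqref{main:2} outright.

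The genuine gap is the Lawrence prism case, which you correctly identify as the heart of the matter but do not prove. A ``finite computation in the spirit of Scott's original argument'' is not available as stated: the dimension $d$, the heights $a_i$, and the index $[N:N_0]$ are all unbounded, and in dimension $\ge 3$ empty simplices can have arbitrarily large normalized volume, so the hypothesis $h^*_3=0$ does not by itself bound anything. The paper's actual argument (Section~\ref{sec:law}) is structural rather than enumerative: slice the prism by the lattice hyperplanes orthogonal to $\eb_d$ into $b=\sum_i a_i$ full-dimensional empty simplices $\Delta_{i,j}$; prove (Proposition~\ref{prop:emplex}) via explicit bijections between the groups $\Lambda_{\Delta}^{(h)}$ --- where the vanishing $h^*_1=h^*_3=0$ is used precisely to show the bijection preserves the degree-$2$ layer --- that all these empty simplices have one and the same $h^*_2=c$; then an inclusion-exclusion over the slices (Proposition~\ref{prop:lll}) gives $h^*_1(P)=b-1$ and $h^*_2(P)=bc$. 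Hence $h^*_1+1$ divides $h^*_2$, which forces either \eqref{main:1} (if $c=0$) or $h^*_2\ge h^*_1+1$, much stronger than \eqref{main:2}, and simultaneously rules out $(h^*_1,h^*_2)=(7,1)$ in this branch --- a fact the paper also needs for the ``moreover'' clause in Proposition~\ref{prop:last}. Without some substitute for this divisibility statement, your proof of the theorem is incomplete.
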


The latter part of the statement of Theorem \ref{thm:main} will be clarified and proven in Proposition~\ref{prop:last}.

Theorem~\ref{thm:main} gives the first relation among the coefficients of the $h^*$-polynomial of a lattice polytope $P$ 
which is valid independently of both the dimension and the degree of $P$. We call this new kind of inequality \emph{universal}. The existence of this kind of inequalities has been conjectured by Benjamin Nill (private communication), who also suggested the term ``universal''.

With the following example, we notice that the condition $h^*_3 = 0$ in Theorem~\ref{thm:main} is necessary.
\begin{ex}
The $5$-dimensional lattice polytope 
\[ \conv(\{\orig,\eb_1,\eb_2,\eb_1+\eb_2+2\eb_3,\eb_4,\eb_4+9\eb_5\}) \subset \R^5, \]
where $\eb_1,\ldots,\eb_5$ are a basis for $\Z^5$, has $1+8t+t^2+8t^3$ as its $h^*$-polynomial. In particular it does not satisfy Scott's inequality. 
\end{ex}

\subsection{Organization of the paper}
The paper is organized as follows. 
Section~\ref{sec:universal_inequalities} is devoted to giving some backgrounds on Ehrhart Theory and known inequalities for $h^*$-vectors of lattice polytopes. The proof of Theorem~\ref{thm:main} will be given in the remaining sections. 
In Section~\ref{sec:spanning}, we prove the main statement of the main theorem for all cases excluding a special one. Such special case needs a technical proof given in Section~\ref{sec:law}. Finally, in Section~\ref{sec:special}, the last part of the statement of Theorem~\ref{thm:main} will be proven.


\section*{Acknowledgments}
The authors would like to thank the PhD advisor of the first author, Benjamin Nill, for posing the question of the existence of universal inequalities, for posing Question~\ref{question2}, and, together with Johannes Hofscheier, for many inspiring discussions. The project started during a visit of the first author at Otto-von-Guericke Universit\"{a}t, Magdeburg. The actual collaboration started while the authors were participants in the ``Einstein Workshop on Lattice Polytopes'' in Berlin. Moreover, the authors would like to thank Takayuki Hibi and Akiyoshi Tsuchiya in Osaka University for their hospitality. 
The first author is partially supported by the Vetenskapsr{\aa}det grant~NT:2014-3991 and the second author is partially supported by JSPS Grant-in-Aid for Young Scientists (B) $\sharp$26800015. 


\section{Inequalities and universal inequalities in Ehrhart Theory}\label{sec:universal_inequalities}

\subsection{Known inequalities in Ehrhart Theory}\label{ss:known_inequalities}
The following proposition summarizes some of the well-known interpretations of the $h^*$-vectors of a lattice polytope. All the following can be deduced from Ehrhart's original approach \cite{Ehr62}.

\begin{prop}
\label{prop:properties}
Let $P$ be a $d$-dimensional lattice polytope of degree $s$ with respect to the lattice $N$. Then the $h^*$-vector $(h^*_0,\ldots,h^*_d)$ of $P$ satisfies 
\[
h^*_0=1, \;\; h^*_1=|P \cap N|-d-1, \;\; h^*_s=|(d+1-s)P^\circ \cap N|, \;\; h^*_d=|P^\circ \cap N|, \;\;\text{and} \;\; \sum_{i=0}^d h^*_i=\Vol(P), 
\]
where by $P^\circ$ we denote the relative interior of $P$ and by $\Vol(P)$ we denote the \emph{normalized volume} of $P$ which equals $d!$ times the Euclidean volume of $P$.
In particular, from the descriptions of $h_1^*$ and $h^*_d$, one gets 
\begin{equation}\label{Ehr_ineq}
h^*_d \leq h^*_1.
\end{equation}
\end{prop}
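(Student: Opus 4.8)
The statement to prove is Proposition~\ref{prop:properties}, which collects the standard combinatorial interpretations of the $h^*$-coefficients. The plan is to work directly from the defining generating function identity
\[
\sum_{k \geq 0} \ehr_P(k)\, t^k = \frac{\sum_{i \geq 0} h_i^* t^i}{(1-t)^{d+1}},
\]
together with Ehrhart--Macdonald reciprocity, and to extract each claimed formula by comparing low-order and high-order coefficients. First I would expand $(1-t)^{-(d+1)} = \sum_{k\geq 0}\binom{d+k}{d}t^k$ and multiply out, obtaining the basic relation
\[
\ehr_P(k) = \sum_{i=0}^{s} h_i^* \binom{d+k-i}{d}.
\]
Setting $k=0$ gives $\ehr_P(0)=h_0^*=1$ (using $\ehr_P(0)=1$ for the empty dilation), and reading off the coefficient of $t^1$ yields $\ehr_P(1)=h_0^*\binom{d+1}{d}+h_1^* = (d+1)+h_1^*$; since $\ehr_P(1)=|P\cap N|$, this rearranges to $h_1^* = |P\cap N|-d-1$.

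The interior formulas come from reciprocity. I would invoke Ehrhart--Macdonald reciprocity, $\ehr_P(-k) = (-1)^d\,|kP^\circ \cap N|$ for $k\geq 1$, which at the level of generating functions is the statement that the numerator polynomial is palindromic up to the interior count; concretely, substituting $t \mapsto 1/t$ in the rational function and comparing shows
\[
\sum_{k\geq 1} |kP^\circ \cap N|\, t^k = \frac{\sum_{i\geq 0} h_{d+1-i}^*\, t^i}{(1-t)^{d+1}},
\]
where $h_j^*=0$ for $j<0$. Reading the coefficient of $t^{d+1-s}$ on the left against the top nonzero coefficient on the right gives $h_s^* = |(d+1-s)P^\circ \cap N|$, and the special case $s=d$ gives $h_d^* = |P^\circ \cap N|$. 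For the volume identity I would evaluate $\sum_{i=0}^d h_i^*$ as the numerator at $t=1$; since $\ehr_P$ has leading coefficient equal to the Euclidean volume of $P$, multiplying the generating-function identity by $(1-t)^{d+1}$ and taking the limit $t\to 1$ (equivalently, comparing the degree-$d$ terms of both sides as polynomials in $k$) shows that the leading coefficient of $\ehr_P$ is $\bigl(\sum_i h_i^*\bigr)/d!$, which equals $\Vol(P)/d!$ by definition of normalized volume, so $\sum_{i=0}^d h_i^* = \Vol(P)$. Finally, the inequality $\eqref{Ehr_ineq}$ is immediate: from $h_1^*=|P\cap N|-d-1$ and $h_d^*=|P^\circ\cap N|$, since $P^\circ \cap N \subseteq P\cap N$ and $P$ has at least $d+1$ vertices (all lying in $P\cap N \setminus P^\circ$), one gets $|P^\circ\cap N| \leq |P\cap N|-(d+1)$, which is exactly $h_d^* \leq h_1^*$.

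\textbf{Main obstacle.} The routine coefficient comparisons are mechanical; the one step requiring care is the reciprocity identity and its precise bookkeeping of indices. I would need to be attentive to the shift by $d+1-s$ versus $d+1-d$ when identifying which coefficient of the reciprocal generating function produces $h_s^*$ as opposed to $h_d^*$, and to the convention that $\ehr_P(0)=1$ rather than $0$. Because all of these formulas are classical and traceable to Ehrhart's original work \cite{Ehr62} and Stanley's nonnegativity results \cite{Sta80}, I expect the proof to be a short, clean assembly of reciprocity plus coefficient extraction rather than anything genuinely difficult; the value of writing it out is in fixing the conventions used throughout the rest of the paper.
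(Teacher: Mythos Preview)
Your argument is correct: the coefficient extractions for $h_0^*$ and $h_1^*$ are routine, the reciprocity computation identifying $h_s^*$ and $h_d^*$ with interior lattice point counts is carried out properly (the lowest-degree term of the reversed numerator is indeed $h_s^* t^{d+1-s}$), the volume identity follows from comparing leading coefficients, and the final inequality $h_d^* \le h_1^*$ is justified since the $d+1$ vertices of $P$ lie on the boundary.

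There is nothing to compare against in the paper: Proposition~\ref{prop:properties} is stated without proof, the authors simply remarking that ``All the following can be deduced from Ehrhart's original approach \cite{Ehr62}.'' Your write-up supplies exactly the standard derivation they are alluding to, so it is entirely in line with what the paper intends; just be aware that in the paper's context this proposition functions as a collection of cited background facts rather than something they set out to prove.
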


In the last decades several other relations among the $h^*$-vectors have been proven. Let $P$ be a $d$-dimensional lattice polytope of degree $s$ with its $h^*$-vector $h^*(P)=(h^*_0,\ldots,h^*_d)$. In \cite{Hib94} Hibi proved that
\[\label{eq:_hibi}
h^*_{d-1} + \cdots + h^*_{d-i} \leq h^*_2 + \dots + h^*_{i+1} \quad \mbox{for }i=1,\ldots, d-1;
\]
while Stanley in~\cite{Sta91} proved that
\[\label{eq:_stanley}
h^*_{0} + \dots + h^*_{i} \leq h^*_{s} + \cdots + h^*_{s-i} \quad \mbox{for } i=0,1,\ldots, s.
\]
Another result from Hibi~\cite{Hib94}, which is valid in the case $P$ has interior points (which is equivalent to $h^*_d >0$), is the following:
\[\label{ineq:hibi}
h^*_1 \leq h^*_i \;\;\mbox{for }i=1,2,\ldots,d-1.
\]
More recently Stapledon~\cite{Sta09,Sta16} showed the existence of infinite new classes of inequalities and improved existing ones.

All the mentioned and previously known families of inequalities for the $h^*$-vectors of lattice polytopes have different forms when specialized to different dimensions or degrees. 
Namely, they are not universal.

\subsection{Realizing $h^*$-polynomial}
The most natural way to approach Question~\ref{question} is to fix a polynomial $h^*(t) \in \Z_{\geq 0}[t]$ and try to check whether it is realizable as the $h^*$-polynomial of a lattice polytope. 
As a first step one can try to fix a polynomial $h(t)= 1 + h^*_1 t + \cdots + h^*_k t^k \in \Z_{\geq 0}[t]$ for some $k > 0$ and check whether it is possible to complete it as the $h^*$-polynomial of a lattice polytope 
by adding parts of degree strictly larger than $k$. It is then natural to ask the following question, which firstly has been posed by Benjamin Nill (private communication).
\begin{quest}\label{question2}
Let $h(t)= 1 + h^*_1 t + \cdots + h^*_k t^k \in \Z_{\geq 0}[t]$ for some $k>0$. Is there a polynomial $H(t) \in \Z_{\geq 0}[t]$ such that $h(t)+t^{k+1}H(t)$ is the $h^*$-polynomial of a lattice polytope?
\end{quest}

None of the known inequalities for the $h^*$-vectors forbids such a possibility. Furthermore, in Proposition~\ref{prop:twocoeff}, we answer positively to Question~\ref{question2} for $k \leq 2$. 

A good tool for realizing $h^*$-vectors as just described is the following construction. 
\begin{lem}[{\cite[Lemma 1.3]{HT09}}]
\label{lem:join}
Let $P \subset \R^m$ and $Q \subset \R^n$ be two lattice polytopes. Then the \emph{join} 
\[P \star Q \coloneqq \conv\{ ({\bf x},\orig_n,0),(\orig_m,{\bf y},1) : {\bf x} \in P, \; {\bf y} \in Q \} \subset \R^{n+m+1},\]
where $\orig_i$ denotes the origin in $\R^i$, has the $h^*$-polynomial $h^*_{P \star Q}(t)=h^*_{P}(t)h^*_{Q}(t)$.
\end{lem}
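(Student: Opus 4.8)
The plan is to reduce the identity to a computation with Ehrhart series. Write $d_P=\dim P$ and $d_Q=\dim Q$. First I would record the standard fact that $\dim(P\star Q)=d_P+d_Q+1$: the copy $P\times\{\orig_n\}\times\{0\}$ of $P$ and the copy $\{\orig_m\}\times Q\times\{1\}$ of $Q$ span complementary coordinate directions and sit at two distinct values of the last coordinate. Hence the Ehrhart series of $P\star Q$ equals $h^*_{P\star Q}(t)/(1-t)^{d_P+d_Q+2}$, and since $h^*_P(t)/(1-t)^{d_P+1}$ and $h^*_Q(t)/(1-t)^{d_Q+1}$ are the Ehrhart series of $P$ and of $Q$, it suffices to show that the Ehrhart series of $P\star Q$ is the product of those of $P$ and $Q$; cancelling the common factor $(1-t)^{d_P+d_Q+2}$ then yields $h^*_{P\star Q}(t)=h^*_P(t)\,h^*_Q(t)$.

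The core step is the convolution identity $\ehr_{P\star Q}(k)=\sum_{j=0}^{k}\ehr_P(k-j)\,\ehr_Q(j)$, valid for all $k\ge 0$ under the usual convention $\ehr_P(0)=\ehr_Q(0)=1$ (legitimate since the constant term of an Ehrhart polynomial is $1$ and $0\cdot P\cap\Z^m=\{\orig_m\}$). The geometric input is that slicing $P\star Q$ by the last coordinate at height $t\in[0,1]$ yields the product polytope $(1-t)P\times tQ\subset\R^m\times\R^n$, so slicing the dilate $k(P\star Q)$ at an integer height $j$ with $0\le j\le k$ yields $(k-j)P\times jQ$. Every lattice point of $k(P\star Q)$ lies at a unique integer height $j\in\{0,\ldots,k\}$, and the slice at height $j$ contains $|(k-j)P\cap\Z^m|\cdot|jQ\cap\Z^n|=\ehr_P(k-j)\,\ehr_Q(j)$ lattice points; summing over $j$ gives the stated identity, which is exactly the Cauchy product of $\sum_{k\ge0}\ehr_P(k)t^k$ and $\sum_{k\ge0}\ehr_Q(k)t^k$.

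I do not expect a genuine obstacle: the whole argument is essentially a one-dimensional slicing computation. The two points deserving a little care are the dimension count above and the behaviour at the extreme heights — at $j=0$ the slice degenerates to $kP\times\{\orig_n\}$ and at $j=k$ to $\{\orig_m\}\times kQ$ — which one checks are handled correctly by the convention $\ehr(0)=1$. If one prefers to avoid that convention, an equivalent route is to work with the generating functions summed over $k\ge 1$ and peel off the $j=0$ and $j=k$ terms by hand, but the bookkeeping above is the cleanest.
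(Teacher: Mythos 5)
The paper does not prove this lemma at all: it is imported verbatim from Henk--Tagami \cite[Lemma 1.3]{HT09}, so there is no in-paper argument to compare against. Your slicing/convolution proof is correct and is essentially the standard argument for this fact: the slice of $P\star Q$ at last coordinate $t\in[0,1]$ is indeed $(1-t)P\times tQ$, the lattice points of $k(P\star Q)$ are partitioned by their (necessarily integral) last coordinate $j\in\{0,\ldots,k\}$, and the resulting identity $\ehr_{P\star Q}(k)=\sum_{j=0}^{k}\ehr_P(k-j)\ehr_Q(j)$ is the Cauchy product of the two Ehrhart series, which combined with $\dim(P\star Q)=\dim P+\dim Q+1$ gives the claim after cancelling $(1-t)^{\dim P+\dim Q+2}$. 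The only wording I would tighten is the dimension count: ``span complementary coordinate directions'' is slightly off when $P$ or $Q$ is not full-dimensional in its ambient space; the clean justification is that if $p_0,\ldots,p_{d_P}$ and $q_0,\ldots,q_{d_Q}$ are affinely independent in $P$ and $Q$ respectively, then the $d_P+d_Q+2$ points $(p_i,\orig_n,0)$, $(\orig_m,q_j,1)$ are affinely independent (the last coordinate forces the $Q$-coefficients to sum to zero, and then each block is handled by affine independence). With that remark, the proof is complete.
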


Moreover we are going to need a special class of lattice simplices having binomial $h^*$-polynomial of arbitrary degree. Such family was described by Batyrev--Hofscheier \cite[Theorem~2.5]{BH10}.

\begin{lem}
\label{lem:uncle_johannes_lemma}
For each choice of positive integers $s \geq 1$ and $b \geq 1$, there exists a $(2s-1)$-dimensional lattice simplex $\Delta_{s,b}$ having the $h^*$-polynomial $h^*_{\Delta_{s,b}}(t)=1 + b t^s$.
\end{lem}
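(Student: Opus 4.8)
The plan is to realise $\Delta_{s,b}$ explicitly and to read off its $h^*$-polynomial from the lattice points of the half-open fundamental parallelepiped of the cone over the simplex. Recall the standard description of the $h^*$-polynomial of a lattice $d$-simplex $\Delta=\conv\{v_0,\dots,v_d\}\subset\R^d$ (a form of \cite{Sta80}): setting $\Pi\coloneqq\bigl\{\sum_{i=0}^d\lambda_i(v_i,1)\with 0\le\lambda_i<1\bigr\}\subset\R^{d+1}$, one has
\[ h^*_\Delta(t)=\sum_{p\in\Pi\cap\Z^{d+1}}t^{p_{d+1}}, \]
where $p_{d+1}$ --- which we call the \emph{height} of $p$ --- is the last coordinate of $p$. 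In particular $|\Pi\cap\Z^{d+1}|=\Vol(\Delta)$, and the origin always contributes the term $t^0$.

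Set $n\coloneqq b+1$ and let $\eb_1,\dots,\eb_{2s-1}$ be the standard basis of $\R^{2s-1}$. The candidate is the $(2s-1)$-simplex
\[ \Delta_{s,b}\coloneqq\conv\bigl(\{\orig,\eb_1,\dots,\eb_{2s-2}\}\cup\{v\}\bigr),\qquad v\coloneqq\eb_1+\dots+\eb_{s-1}-\eb_s-\dots-\eb_{2s-2}+n\,\eb_{2s-1} \]
(for $s=1$ this reads $\Delta_{1,b}=\conv\{\orig,n\eb_1\}$, the interval $[0,b+1]$). First I would check that $\eb_1,\dots,\eb_{2s-2},v$ are linearly independent with $|\det(\eb_1,\dots,\eb_{2s-2},v)|=n$ --- expand the determinant along the last row --- so that $\Delta_{s,b}$ is genuinely $(2s-1)$-dimensional and $\Vol(\Delta_{s,b})=n$, in agreement with $\sum_i h^*_i=1+b$.

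The core step is to enumerate $\Pi\cap\Z^{2s}$. Write a point of $\Pi$ as $\mu_0(\orig,1)+\sum_{j=1}^{2s-2}\mu_j(\eb_j,1)+\mu\,(v,1)$ with all coefficients in $[0,1)$, and write $\{x\}$ for the fractional part of $x$. Its $(2s-1)$-st spatial coordinate equals $\mu n$, so lying in $\Z^{2s}$ forces $\mu=k/n$ for a unique $k\in\{0,1,\dots,n-1\}$; the $j$-th spatial coordinate ($1\le j\le 2s-2$) then forces $\mu_j=\{-kc_j/n\}$, where $c_j=\pm1$ is the $j$-th coordinate of $v$; and integrality of the height forces $\mu_0=\{-\sum_{j=1}^{2s-2}\mu_j-\mu\}$. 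Hence $\Pi$ contains exactly $n$ lattice points, one for each $k$. For $k=0$ this is the origin, of height $0$. For $1\le k\le n-1$ the $2s-2$ numbers $\mu_1,\dots,\mu_{2s-2}$ split into $s-1$ pairs, each of the shape $\{k/n\}+\{-k/n\}=1$ (using $0<k/n<1$), so $\sum_{j=1}^{2s-2}\mu_j=s-1$; therefore $\mu_0=\{-(s-1)-k/n\}=1-k/n$, and the height of this point equals $(1-k/n)+(s-1)+(k/n)=s$. Thus the $n-1=b$ points with $k\ne0$ all have height $s$, and
\[ h^*_{\Delta_{s,b}}(t)=1+b\,t^s, \]
as desired.

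The one point that needs care is this last height computation: one must verify that the integer height forced on the point indexed by $k\ne0$ is exactly $s$, and not $s-1$ or $s+1$. This is precisely where the sign pattern of $v$ (namely $s-1$ coordinates equal to $+1$ and $s-1$ equal to $-1$, together with $\gcd(1,n)=1$) enters, making the $\mu_j$ pair up to contribute exactly $s-1$, and where the identity $\mu_0+\mu=(1-k/n)+k/n=1$ supplies the remaining unit; everything else is routine bookkeeping. I would also remark that this $\Delta_{s,b}$ coincides, up to unimodular equivalence, with the simplex of \cite[Theorem~2.5]{BH10}.
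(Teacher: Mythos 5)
Your construction is correct, and every step checks out: the determinant computation gives $\Vol(\Delta_{s,b})=n=b+1$; integrality of the $(2s-1)$-st coordinate pins down $\mu=k/n$; the remaining coordinates and the height then determine the other parameters uniquely, so $|\Pi\cap\Z^{2s}|=n$; and the pairing of the $s-1$ coordinates with $c_j=+1$ against the $s-1$ with $c_j=-1$, plus the identity $\mu_0+\mu=1$ for $k\neq 0$, does force every nonzero parallelepiped point to height exactly $s$. The degenerate case $s=1$ (where there are no $\eb_j$'s at all) is also handled correctly. The only substantive difference from the paper is that the paper gives \emph{no} proof of this lemma: it is stated as a consequence of Batyrev--Hofscheier \cite[Theorem~2.5]{BH10}, where such simplices are described via their associated finite abelian group $\Lambda_\Delta$ being cyclic of order $b+1$ generated by an element with $s$ coordinates equal to $1/n$ and $s$ equal to $1-1/n$ (which is exactly the group your point with $k=1$ generates, so your simplex is indeed one of theirs, as you remark at the end). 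Your write-up therefore buys a self-contained, fully explicit verification in place of a citation, at the cost of a page of bookkeeping; it could be spliced in as an actual proof of the lemma with no changes of substance.
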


\begin{prop}
\label{prop:twocoeff}
Let $h(t)= 1 + a t + b t^2 \in \Z_{\geq 0}[t]$. Then there exists a polynomial $H(t) \in \Z_{\geq 0}[t]$ such that $h(t)+t^3H(t)$ is the $h^*$-polynomial of a lattice polytope. 
In particular, Question~\ref{question2} is true for $k \leq 2$. 
\end{prop}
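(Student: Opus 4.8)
The plan is not to realize the polynomial $1+at+bt^2$ itself---by Theorem~\ref{thm:generalizedScott} this is already impossible when $b\geq 1$, $a\geq 3b+4$ and $(a,b)\neq(7,1)$, since the degree of a lattice polytope is read off its $h^*$-polynomial---but instead to produce a lattice polytope whose $h^*$-polynomial agrees with $h(t)$ in degrees $\leq 2$ and deviates from it only in degree $3$. I would obtain such a polytope as a join of two pieces with explicitly known $h^*$-polynomials.

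First I would note that, for every $a\geq 0$ and $b\geq 0$, both binomials $1+at$ and $1+bt^2$ are $h^*$-polynomials of lattice polytopes \emph{on their own}. For $1+at$, take a single point when $a=0$ and the $1$-dimensional simplex $\Delta_{1,a}$ of Lemma~\ref{lem:uncle_johannes_lemma} when $a\geq 1$. For $1+bt^2$, take a point when $b=0$ and the $3$-dimensional simplex $\Delta_{2,b}$ of Lemma~\ref{lem:uncle_johannes_lemma} when $b\geq 1$. The crucial observation is that each of these two binomials satisfies all known constraints individually, even though their product may violate Scott's inequality in degrees $\leq 2$.

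Now let $P$ realize $1+at$ and $Q$ realize $1+bt^2$, and set $R\coloneqq P\star Q$. By Lemma~\ref{lem:join},
\[
h^*_R(t)=(1+at)(1+bt^2)=1+at+bt^2+ab\,t^3=h(t)+ab\,t^3,
\]
so the constant polynomial $H(t)\coloneqq ab\in\Z_{\geq 0}[t]$ works (and $H=0$ in the degenerate cases $a=0$ or $b=0$, where $h(t)$ is itself realizable). This also settles Question~\ref{question2} for $k\leq 2$: the case $k=2$ is exactly the assertion just proved, while for $k=1$ the polynomial $1+at$ is already realizable as noted above, so one may take $H=0$.

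I do not expect a genuine obstacle. The only point requiring thought is to factor the target through the two manifestly realizable binomials $1+at$ and $1+bt^2$, so that the multiplicativity of the join (Lemma~\ref{lem:join}) confines the unavoidable discrepancy with $1+at+bt^2$ to a single coefficient in degree $3$---precisely the freedom allowed by Question~\ref{question2} with $k=2$.
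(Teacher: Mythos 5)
Your proposal is correct and is essentially the paper's own proof: the authors likewise take the join of a lattice segment realizing $1+at$ with the simplex $\Delta_{2,b}$ realizing $1+bt^2$ and apply the multiplicativity of Lemma~\ref{lem:join} to get $h(t)+ab\,t^3$. Your explicit handling of the degenerate cases $a=0$ or $b=0$ is a small extra care the paper leaves implicit.
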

\begin{proof}
We can choose $H(t) \coloneqq ab$. Indeed, by Lemma~\ref{lem:uncle_johannes_lemma} there exists a three-dimensional lattice simplex $\Delta_{2,b}$ having the $h^*$-polynomial $h^*_{\Delta_{2,b}}(t)=1 + b t^2$. Let $L_a$ be the lattice segment ($1$-dimensional lattice polytope) $L_a \coloneqq \conv(\{0,a+1\}) \subset \R$. 
Then the join $\Delta_{2,b} \star L_a$ has the $h^*$-polynomial $h^*_{\Delta_{2,b} \star L_a}(t)=(1 + b t^2)(1 + a t)=1 + a t + b t^2 + ab t^3$ by Lemma \ref{lem:join}, as required. 
\end{proof}

On the other hand Theorem~\ref{thm:main} answers negatively to all the other cases.

\begin{cor}
Question~\ref{question2} is false for $k \geq 3$.
\end{cor}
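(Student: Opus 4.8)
The plan is to deduce the corollary directly from the Main Theorem (Theorem~\ref{thm:main}) by exhibiting, for every $k \geq 3$, a single polynomial $h(t) \in \Z_{\geq 0}[t]$ (of degree at most $k$) that admits no completion of the form $h(t) + t^{k+1}H(t)$, with $H(t) \in \Z_{\geq 0}[t]$, equal to the $h^*$-polynomial of a lattice polytope. So the corollary is really a formal consequence of Theorem~\ref{thm:main} together with the obvious fact that multiplying by $t^{k+1}$ leaves the low-degree coefficients alone.

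Concretely, I would take $h(t) \coloneqq 1 + 100\,t + t^2$, regarded as the polynomial $1 + h^*_1 t + \dots + h^*_k t^k$ with $h^*_1 = 100$, $h^*_2 = 1$, and $h^*_3 = \dots = h^*_k = 0$; any pair $(h^*_1, h^*_2)$ with $h^*_2 \geq 1$, $h^*_1 > 3h^*_2 + 3$, and $(h^*_1,h^*_2) \neq (7,1)$ works equally well. Suppose toward a contradiction that there is $H(t) \in \Z_{\geq 0}[t]$ for which $g(t) \coloneqq h(t) + t^{k+1}H(t)$ is the $h^*$-polynomial of some lattice polytope $P$. Since $k \geq 3$, one has $k+1 \geq 4$, so $t^{k+1}H(t)$ contributes nothing in degrees $\leq 3$; hence the $h^*$-vector of $P$ satisfies $h^*_1(P) = 100$, $h^*_2(P) = 1$, and $h^*_3(P) = 0$. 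By Theorem~\ref{thm:main}, the $h^*$-polynomial of $P$ must then satisfy Scott's inequality, i.e.\ one of the conditions \eqref{main:1}--\eqref{main:3}. But $h^*_2(P) = 1 \neq 0$ rules out \eqref{main:1}, the inequality $100 > 3\cdot 1 + 3$ rules out \eqref{main:2}, and $(h^*_1(P), h^*_2(P)) = (100,1) \neq (7,1)$ rules out \eqref{main:3} --- a contradiction. Therefore no such $H(t)$ exists, and Question~\ref{question2} is false for every $k \geq 3$.

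I do not anticipate any genuine obstacle: the only step needing attention is the elementary observation that passing from $h(t)$ to $h(t) + t^{k+1}H(t)$ preserves the coefficients in degrees $0, 1, 2, 3$, which is exactly what forces the hypothesis $k \geq 3$ and is in perfect agreement with the positive answer obtained for $k \leq 2$ in Proposition~\ref{prop:twocoeff}.
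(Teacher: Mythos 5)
Your proof is correct and is exactly the argument the paper intends: the corollary is stated there without an explicit proof, as an immediate consequence of Theorem~\ref{thm:main} (``Theorem~\ref{thm:main} answers negatively to all the other cases''), and your write-up simply makes that implication explicit with the concrete witness $1+100t+t^2$ and the observation that $t^{k+1}H(t)$ cannot alter $h^*_3$ when $k\geq 3$.
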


Exploiting the construction used in Proposition~\ref{prop:twocoeff}, one can easily create infinite families of polytopes of arbitrarily large degree satisfying Scott's inequality of Theorem~\ref{thm:main}.

\begin{prop}
For each choice of $s \geq 6$ and nonnegative integers $h^*_1,h^*_2$ satisfying the conditions \emph{\eqref{main:1}--\eqref{main:3}} of Theorem~\ref{thm:main}, 
there exists a lattice polytope of degree $s$ for which Theorem~\ref{thm:main} agrees.
\end{prop}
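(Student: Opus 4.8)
The plan is to build, for each admissible pair $(h_1^*, h_2^*)$ and each target degree $s \geq 6$, an explicit lattice polytope whose $h^*$-polynomial has the prescribed coefficients in degrees $1$ and $2$, satisfies $h_3^* = 0$, and has degree exactly $s$. The construction of Proposition~\ref{prop:twocoeff} is the wrong shape here — it produces a cubic $h^*$-polynomial with $h_3^* = ab$, which violates the hypothesis of Theorem~\ref{thm:main} — so instead I would use the join construction of Lemma~\ref{lem:join} to paste a degree-two polytope realizing $1 + h_1^* t + h_2^* t^2$ onto a high-degree polytope with binomial $h^*$-polynomial $1 + t^{s-2}$, obtained from Lemma~\ref{lem:uncle_johannes_lemma} with $b = 1$. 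The product $(1 + h_1^* t + h_2^* t^2)(1 + t^{s-2})$ then has no degree-three term as soon as $s - 2 \geq 4$, i.e.\ $s \geq 6$, which is exactly the stated range; its degree is $s$ because the top coefficient $h_2^*$ is nonzero (and if $h_2^* = 0$ one replaces the degree-two factor by a segment realizing $1 + h_1^* t$, giving degree $s - 1$, so one instead multiplies by $1 + t^{s-1}$ from Lemma~\ref{lem:uncle_johannes_lemma}, or treats that case separately).

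The key steps, in order, are: first, invoke the Henk--Tagami result cited after Theorem~\ref{thm:generalizedScott} to produce a lattice polytope $P_0$ of degree $2$ with $h^*$-polynomial $1 + h_1^* t + h_2^* t^2$ whenever $(h_1^*, h_2^*)$ satisfies \eqref{main:1}--\eqref{main:3} (in the case \eqref{main:1}, $h_2^* = 0$, a segment $\conv(\{0, h_1^* + 1\})$ already works). Second, apply Lemma~\ref{lem:uncle_johannes_lemma} with $s \mapsto s - 2$ and $b = 1$ to obtain the simplex $\Delta_{s-2, 1}$ with $h^*_{\Delta_{s-2,1}}(t) = 1 + t^{s-2}$. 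Third, form the join $P_0 \star \Delta_{s-2,1}$ and apply Lemma~\ref{lem:join} to compute its $h^*$-polynomial as the product. Fourth, expand the product: the coefficient of $t$ is $h_1^*$, of $t^2$ is $h_2^*$, of $t^3$ is $0$ (since $s - 2 \geq 4 > 3$ and the degree-two factor contributes nothing beyond $t^2$), and the top degree is $\max(2, s-2) + \text{(shift from the other factor)}$; careful bookkeeping shows the degree is exactly $s$, since the leading term is $h_2^* t^2 \cdot t^{s-2} = h_2^* t^s$ with $h_2^* \neq 0$ in cases \eqref{main:2} and \eqref{main:3}. Finally, observe that this $h^*$-polynomial satisfies $h_3^* = 0$ and, by construction, obeys whichever of \eqref{main:1}--\eqref{main:3} was assumed, so Theorem~\ref{thm:main} indeed "agrees" on this polytope.

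The only genuine subtlety — and thus the main obstacle — is the case $h_2^* = 0$ (condition \eqref{main:1}): here the degree-two factor degenerates, so one must be slightly careful about what "degree $s$" means and adjust the binomial factor accordingly (use $1 + t^{s-1}$ rather than $1 + t^{s-2}$, from $\Delta_{s-1,1}$), and similarly verify that the resulting polynomial still has $h_3^* = 0$, which holds as long as $s - 1 \geq 4$, i.e.\ $s \geq 5$, comfortably within the stated range $s \geq 6$. Everything else is a direct computation with the multiplicativity of $h^*$-polynomials under joins, and no dimension or degree bound beyond $s \geq 6$ is needed to make the degree-three coefficient vanish. I would write the proof compactly: pick the two factors, cite Lemma~\ref{lem:join}, expand, and note the three defining properties.
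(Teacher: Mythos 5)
Your proposal is correct and follows essentially the same route as the paper: realize $1+h_1^*t+h_2^*t^2$ by a degree-$(\le 2)$ polytope via Henk--Tagami, take a binomial simplex $\Delta_{s-2,k}$ from Lemma~\ref{lem:uncle_johannes_lemma}, and multiply the $h^*$-polynomials via the join of Lemma~\ref{lem:join}, with $s\ge 6$ guaranteeing the vanishing of the cubic coefficient. Your extra care in the degenerate case $h_2^*=0$ (switching to $\Delta_{s-1,1}$, resp.\ $\Delta_{s,1}$ when also $h_1^*=0$, so that the degree is exactly $s$) is in fact a point the paper's own one-line proof glosses over.
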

\begin{proof}
By \cite[Proposition~1.10]{HT09}, there exists a polytope $Q$ of degree at most two having the $h^*$-polynomial $h^*_Q(t) = 1 + h^*_1 t + h^*_2 t^2$. By Lemma~\ref{lem:join} and Lemma~\ref{lem:uncle_johannes_lemma}, and for any choice of $k \geq 1$, the join $\Delta_{s-2,k} \star Q$ has the $h^*$-polynomial $h^*_{\Delta_{s-2,k} \star Q}(t)=(1 + k t^{s-2})(1 + h^*_1 t + h^*_2  t^2)=1 + h^*_1 t + h^*_2 t^2 + k t^{s-2} + kh^*_1 t^{s-1} + k h^*_2 t^s$.
\end{proof}

One might be tempted to suspect that the only interesting cases for which Theorem~\ref{thm:main} applies, are these polytopes artificially built via joins, as in the previous proposition. This is not the case: in Example~\ref{ex:non_join} we build a lattice polytope of degree five satisfying Scott's inequality that cannot be obtained via a join construction.


\section{The non-Lawrence prism case}\label{sec:spanning}

In this section we recall the key notion of our proof from \cite{HKN16} and give a proof of Theorem~\ref{thm:main} for the majority of the cases, namely for all the polytopes whose spanning polytope (see definition below) is not a Lawrence prism (see $\eqref{eq:lawrence}$). The Lawrence prism case needs technical proofs and will be treated in Section~\ref{sec:law}.

\subsection{Spanning polytopes}
Let $P \subset \R^d$ be a $d$-dimensional lattice polytope with respect to the lattice $N$ of rank $d$. 
Let us denote by $\widetilde{N}$ the affine sublattice of $N$ generated by the points of $P \cap N$, i.e. $\widetilde{N}$ consists of all the integral affine combinations of $P \cap N$. We define the \emph{spanning polytope $\widetilde{P}$ associated to $P$} as the lattice polytope given by the vertices of $P$ with respect to the lattice $\widetilde{N}$ and we say that $P$ is \emph{spanning} if $\widetilde{N}=N$.

We denote by $h^*_{\widetilde{P}}(t) = 1 + \widetilde{h_1^*}t + \cdots + \widetilde{h_{\tilde{s}}^*}t^{\tilde{s}}$ the $h^*$-polynomial of $\widetilde{P}$. 
Then we have the following inequalities 
\begin{equation}\label{eq:h_spanning}
\widetilde{h_1^*} = h^*_1 \;\; \text{ and }\;\; \widetilde{h_i^*} \le h_i^*  \;\text{ for }\; i \ge 2, 
\end{equation}
and in particular $\tilde{s} \leq s$. (See \cite[Section 3.2]{HKN16}.) 

We use the following recent result by Hofscheier--Katth\"{a}n--Nill.
\begin{thm}[{\cite[Theorem~1.3]{HKN16}}]\label{thm:HKN}
The $h^*$-polynomial $h^*_{\tilde{P}}(t) = 1 + \widetilde{h_1^*}t + \cdots + \widetilde{h_{\tilde{s}}^*} t^{\tilde{s}}$ of any spanning polytope $\widetilde{P}$ satisfies
\[\widetilde{h_i^*} \geq 1 \quad \mbox{for all} \quad i=0, \ldots , \tilde{s}. \]
\end{thm}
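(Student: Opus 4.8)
The plan is to recast the claim as a nonvanishing statement for an Artinian reduction of the Ehrhart semigroup ring, and then to use the spanning hypothesis to exhibit a nonzero class in each relevant degree. Throughout I abbreviate the spanning polytope $\widetilde P$ as $P$, writing its $h^*$-vector as $(h_0^*,\ldots,h_s^*)$ with degree $s=\tilde s$, and I reduce at once to the full-dimensional case, so that $P$ is $d$-dimensional and $P\cap N$ generates $N$ affinely. Two of the inequalities are automatic: $h_0^*=1$ always, and $h_s^*\geq 1$ since $s$ is by definition the top nonzero degree. Hence the entire content of the theorem is the \emph{absence of internal zeros}, namely $h_i^*\geq 1$ for $1\le i\le s-1$.

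Next I would pass to the cone $C=\cone(P\times\{1\})\subset\R^{d+1}$, form the affine semigroup $S=C\cap\Z^{d+1}$ graded by the last (height) coordinate, and set $R=\kk[S]$. Its Hilbert series is the Ehrhart series of $P$, so $h_P^*(t)$ is the numerator, that is, the $h$-polynomial of $R$. Here the spanning hypothesis enters for the first time: it guarantees that the degree-one part of $S$ generates $\Z^{d+1}$ as a group, so $R$ is a normal domain of Krull dimension $d+1$, hence Cohen--Macaulay by Hochster's theorem. Consequently $h_i^*=\dim_\kk (R/\Theta)_i$ for any homogeneous system of parameters $\Theta=(\theta_0,\ldots,\theta_d)$ of degree one, and proving $h_i^*\geq 1$ amounts to producing a single nonzero element in degree $i$ of this Artinian quotient.

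The heart of the argument is to build such witnesses using that $P$ is spanning. Since the height-one lattice points generate the full lattice, I can choose $\Theta$ from degree-one monomials so that $R/\Theta$ acquires a monomial $\kk$-basis indexed by the elements of $S$ lying in a fixed half-open fundamental domain; it then suffices to show that this fundamental domain contains a lattice point at every height $i$ with $1\le i\le s$. To organize this I would take a lattice (dense) triangulation $\Tau$ of $P$ using all the points of $P\cap N$, equip it with a line shelling, and expand $h_P^*(t)=\sum_{\sigma\in\Tau}t^{a(\sigma)}B_\sigma(t)$, where $B_\sigma$ is the box polynomial recording lattice points in the half-open fundamental parallelepiped of $\cone(\sigma\times\{1\})$ and $a(\sigma)$ is the shelling shift. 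The goal then becomes: every height $1,\ldots,s$ is hit by some shifted box contribution.

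I expect the main obstacle to be precisely this covering of all intermediate heights, since a dense lattice triangulation of a spanning polytope need not have unimodular or spanning simplices, so positivity cannot be read off simplex by simplex and must instead be forced by the \emph{global} spanning condition. My plan to overcome it is an induction, either on $d$ or on the number of lattice points of $P$, removing one lattice point (or one lattice direction) at a time and comparing $h^*$-vectors via Stanley's monotonicity theorem, which gives $h_i^*(Q)\le h_i^*(P)$ whenever $Q\subseteq P$ is a lattice subpolytope; the extremal role of the spanning case is already visible in $\eqref{eq:h_spanning}$. The base case is that of spanning simplices, where the box polynomial can be computed directly from the defining data and shown to be supported in every degree up to $s$, in the spirit of the binomial simplices $\Delta_{s,b}$ of Lemma~\ref{lem:uncle_johannes_lemma}. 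Assembling the inductive step so that no internal height is lost when passing from a subconfiguration to the full spanning configuration is the delicate technical point on which the whole proof hinges.
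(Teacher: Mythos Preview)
The paper does not prove this theorem; it is quoted from \cite{HKN16} and used as a black box. On its own merits, your proposal has the right ambient framework (cone, semigroup ring, Cohen--Macaulayness, Artinian reduction) but misidentifies where the spanning hypothesis enters and leaves the actual hard step undone. Normality and Cohen--Macaulayness of $R=\kk[C\cap\Z^{d+1}]$ hold for \emph{every} lattice polytope, since $C\cap\Z^{d+1}$ is saturated; spanning is irrelevant there. Choosing a monomial system of parameters so that $R/\Theta$ has a basis indexed by a single half-open parallelepiped works only when $P$ is a simplex; for general $P$ you must triangulate, and then the question becomes exactly the one you flag as the ``main obstacle'' without resolving.

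Your induction plan does not close the gap. Stanley monotonicity gives $h_i^*(Q)\le h_i^*(P)$, so positivity for $Q$ would transfer to $P$---but removing a lattice point typically destroys the spanning property of $Q$, so the inductive hypothesis does not apply; and even when it does, $\deg(Q)$ may drop below $s$, leaving the intermediate heights uncovered. Your base case is also not established: for a spanning simplex there is no direct formula forcing the box polynomial to be nonzero at every degree up to $s$, and the simplices $\Delta_{s,b}$ you cite have $h^*=1+bt^s$ with all intermediate coefficients zero, so they are precisely \emph{non}-spanning and illustrate the difficulty rather than resolve the base case. The missing idea is a mechanism that uses spanning to connect a height-$s$ lattice point (which exists since $h_s^*\ge1$) down to height $1$ through witnesses at every intermediate height inside the relevant half-open cones; this is the substantive content of \cite{HKN16}, and your sketch does not supply it.
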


In particular the spanning polytope $\widetilde{P}$ of a lattice polytope $P$ having $h^*_3=0$ has degree at most two.

\subsection{Characterization of lattice polytopes with degree one}
We recall the work by Batyrev--Nill \cite{BN07}. Given a $d$-dimensional lattice polytope $Q \subset \R^d$ with respect to $\Z^d$, 
we define the \emph{lattice pyramid} $\Pyr(Q)$ as the $(d+1)$-dimensional polytope
\begin{equation}\label{eq:lattice_pyramid}
\Pyr(Q) \coloneqq \conv(Q \times \{0\} \cup \{(0,\ldots,0,1)\}) \subset \R^{d+1}. 
\end{equation}
The lattice pyramid construction preserves the $h^*$-polynomial, i.e. $h^*_{Q}(t)=h^*_{\Pyr(Q)}(t)$ (\cite[Theorem 2.4]{BR15}). 
We say that a $d$-dimensional lattice polytope $P$ is an \emph{exceptional simplex} if $P$ can be obtained 
via the $(d-2)$-fold iterations of the lattice pyramid construction over the second dilation of a unimodular simplex, that is, 
\[ 
P \cong \Pyr(\cdots(\Pyr(\conv(\{ \orig , 2\eb_1 , 2\eb_2\})))\cdots).
\]
We say that a $d$-dimensional lattice polytope $P$ is a \emph{Lawrence prism} with \emph{heights} $a_0, \ldots , a_{d-1}$ if there exist nonnegative integers $a_0, \ldots , a_{d-1}$ such that 
\[\label{eq:lawrence}
P \cong \conv (\{ \orig , a_0 \eb_d , \eb_1 , \eb_1 + a_1 \eb_d , \ldots , \eb_{d-1}, \eb_{d-1} + a_{d-1} \eb_d  \}).
\]

\begin{thm}[{\cite[Theorem~2.5]{BN07}}]\label{thm:BN}
Let $P$ be a lattice polytope. Then $\deg(P) \leq 1$ if and only if $P$ is an exceptional simplex or a Lawrence prism.
\end{thm}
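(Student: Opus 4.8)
We sketch a proof of Theorem~\ref{thm:BN}. The plan is to verify the ``if'' direction by a direct computation, and to obtain the ``only if'' direction by stripping off lattice pyramids and then classifying the remaining non-pyramids of degree at most one. For the ``if'' direction, first let $P$ be an exceptional simplex: since the lattice pyramid construction preserves the $h^*$-polynomial, it suffices to observe that, by Proposition~\ref{prop:properties}, the triangle $\conv(\{\orig,2\eb_1,2\eb_2\})$ --- which has normalized volume $4$, contains six lattice points, and has no interior lattice point, the only candidate $\eb_1+\eb_2$ lying on the facet $x_1+x_2=2$ --- has $h^*$-polynomial $1+3t$, so $\deg P=1$. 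Next let $P$ be a Lawrence prism with heights $a_0,\dots,a_{d-1}$: forgetting the last coordinate defines a lattice projection of $P$ onto the unimodular simplex $\Delta_{d-1}=\conv(\{\orig,\eb_1,\dots,\eb_{d-1}\})$, and an interior lattice point of a dilate $kP$ would project into the interior of $k\Delta_{d-1}$, where it would be a vector in $\Z^{d-1}$ with all entries $\ge 1$ and entry-sum at most $k-1$, so $k\ge d$. Hence $kP$ is hollow for every $k\le d-1$; setting $s\coloneqq\deg P$ and assuming $s\ge 2$, Proposition~\ref{prop:properties} would give $h^*_s=|(d+1-s)P^\circ\cap N|\ge 1$ with $d+1-s\le d-1$, contradicting hollowness of that dilate. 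Therefore $\deg P\le 1$.

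\textbf{Reduction to non-pyramids.} Both families in the statement are closed under the pyramid operation: a pyramid over $\conv(\{\orig,2\eb_1,2\eb_2\})$, or over an exceptional simplex, is again an exceptional simplex; and, after the unimodular coordinate exchange $\eb_{d-1}\leftrightarrow\eb_d$, a pyramid over the Lawrence prism with heights $(a_0,\dots,a_{d-2})$ is the Lawrence prism with heights $(a_0,\dots,a_{d-2},0)$. Since $\Pyr$ preserves the $h^*$-polynomial and hence the degree, stripping off one pyramid at a time reduces the ``only if'' direction to the claim that every \emph{non-pyramid} lattice polytope of degree at most one is $\conv(\{\orig,2\eb_1,2\eb_2\})$ or a Lawrence prism. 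A polytope of degree $0$ has $h^*\equiv 1$, hence normalized volume $1$, hence is a unimodular simplex --- which is a lattice pyramid unless it is a single point, the degenerate Lawrence prism. So it remains to treat $P$ with $\deg P=1$, $d\coloneqq\dim P\ge 1$, and $P$ not a pyramid, and to show that $P\cong\conv(\{\orig,2\eb_1,2\eb_2\})$ (with $d=2$) or that $P$ is a Lawrence prism with all heights positive.

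\textbf{Classifying the non-pyramids.} Since $\deg P=1$, the polytope $P$ is hollow: for $d\ge 2$ this follows from $h^*_d=0$ and Proposition~\ref{prop:properties}, and the case $d=1$, where $P=\conv(\{0,a\})$ with $a\ge 2$ is a Lawrence prism, is immediate. For $d\ge 2$ I would split according to the lattice width $w(P)=\min\{\,\max_P u-\min_P u : u\in\Hom(N,\Z)\ \text{primitive}\,\}$. If $w(P)=1$, choose a primitive $u$ with $0\le u\le 1$ on $P$ and set $P_i\coloneqq P\cap\{u=i\}$ for $i\in\{0,1\}$; since every vertex of $P$ lies in $P_0$ or $P_1$, the polytope $P$ is the Cayley polytope of the lattice polytopes $P_0$ and $P_1$. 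Translating the degree-one condition into this Cayley structure --- which forces the intermediate Minkowski combinations $(d-1-j)P_0+jP_1$, for $1\le j\le d-2$, to be hollow --- and inducting on $d$, one shows that $P_0$ and $P_1$ must be assembled out of lattice segments, so that $P$ is a Lawrence prism with positive heights. If instead $w(P)\ge 2$, then $P$ is a hollow lattice polytope of width at least two, and combining the classification of such polytopes with the degree-one restriction (which eliminates all of them but a single polygon) forces $d=2$ and $P\cong\conv(\{\orig,2\eb_1,2\eb_2\})$.

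\textbf{The main obstacle.} The delicate step is the width-one case: upgrading the Cayley decomposition $P=\mathrm{Cayley}(P_0,P_1)$ to a genuine Lawrence-prism (that is, Cayley-of-segments) description. This is precisely where the degree hypothesis must be used in an essential way --- a Cayley polytope of two arbitrary hollow polytopes is nowhere near a Lawrence prism --- and handling it requires an induction on the dimension that peels off one segment fibre at a time, with the classification of hollow lattice polygons serving as the base case; in parallel, one must rule out the finitely many hollow lattice polytopes of width at least two that could a priori be of degree at most one, which is the other point needing care.
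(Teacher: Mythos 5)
The paper does not actually prove this statement---it is quoted from Batyrev--Nill \cite{BN07} and used as a black box---so there is no internal proof to compare against; I will assess your argument on its own. Your ``if'' direction is complete and correct: the computation of $h^*_{\conv(\{\orig,2\eb_1,2\eb_2\})}(t)=1+3t$ via Proposition~\ref{prop:properties}, the invariance of the $h^*$-polynomial under $\Pyr$, and the projection argument showing that $(d-1)P$ is hollow for a $d$-dimensional Lawrence prism $P$ (so that $h^*_s=|(d+1-s)P^\circ\cap N|\geq 1$ is impossible for $s\geq 2$) are all sound, as is the reduction of the ``only if'' direction to non-pyramids.

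The ``only if'' direction, however, has two genuine gaps, which you partly acknowledge but do not close. In the width-one case, the entire content of the theorem is compressed into ``one shows that $P_0$ and $P_1$ must be assembled out of lattice segments.'' By monotonicity and induction you may assume $P_0$ and $P_1$ are themselves exceptional simplices or Lawrence prisms, but the decisive step is ruling out bad gluings: for instance, the Cayley polytope of two lattice segments of length $2$ in independent directions, $\conv(\{\orig,2\eb_1,\eb_3,2\eb_2+\eb_3\})$, has $h^*$-vector $(1,2,1,0)$ and hence degree two, so the degree hypothesis must actively constrain how $P_0$ and $P_1$ sit relative to each other; the remark that the mixed Minkowski sums $(k-j)P_0+jP_1$ are hollow is the right starting point, but no deduction from it is made. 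In the width-$\geq 2$ case you invoke ``the classification of hollow lattice polytopes of width at least two,'' which does not exist in arbitrary dimension: such polytopes occur in every dimension $\geq 2$, are classified only up to dimension three, and even their finiteness modulo lattice projections is a substantially later and harder result than the theorem at hand. What the degree hypothesis actually yields is that $(d-1)P$ is hollow, hence $(d-1)\width(P)\leq\flt(d)$; since all known upper bounds on the flatness constant $\flt(d)$ are superlinear in $d$, this does not force $\width(P)\leq 1$, let alone $d=2$. So the case split is a reasonable outline, but both branches are missing their decisive arguments; as written this is a plan rather than a proof, and the reader is better served by the citation to \cite{BN07}.
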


This classification is a powerful tool. Indeed, as we will see in Proposition \ref{prop:deduce}, the case in which the spanning polytope $\widetilde{P}$ of $P$ has degree one is the only one that cannot be entirely deduced directly from Theorem~\ref{thm:HKN} and Theorem~\ref{thm:generalizedScott}. For this special case the classification result is necessary.

\subsection{A proof for lattice polytopes whose spanning polytope is not a Lawrence prism}
\begin{prop}\label{prop:deduce}
Let $P$ be a lattice polytope whose $h^*$-polynomial $h^*_P(t)=1+h_1^*t+h_2^*t^2+\cdots $ satisfies $h^*_3=0$. 
Assume that the spanning polytope $\widetilde{P}$ of $P$ is not a Lawrence prism. Then the inequalities \eqref{main:1}--\eqref{main:3} in Theorem \ref{thm:main} hold. 
\end{prop}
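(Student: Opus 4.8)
\textbf{Proof proposal for Proposition~\ref{prop:deduce}.}

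The plan is to reduce everything to the two-dimensional case (Theorem~\ref{thm:generalizedScott}) by passing to the spanning polytope $\widetilde{P}$, which by Theorem~\ref{thm:HKN} has degree at most two since $h^*_3 = 0$. The inequalities $\eqref{eq:h_spanning}$ give $\widetilde{h_1^*} = h^*_1$ and $\widetilde{h_2^*} \le h^*_2$, so any upper bound on $h^*_1$ in terms of $\widetilde{h_2^*}$ is automatically an upper bound in terms of $h^*_2$; thus it suffices to establish Scott's inequality for $\widetilde{P}$. I would split into cases according to $\tilde{s} = \deg(\widetilde{P}) \in \{0,1,2\}$. The case $\tilde{s} = 0$ is trivial ($\widetilde{P}$ is a unimodular simplex, $h^*_1 = \widetilde{h_1^*} = 0$). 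For $\tilde{s} = 2$, apply Theorem~\ref{thm:generalizedScott} directly to $\widetilde{P}$: its $h^*$-polynomial $1 + h^*_1 t + \widetilde{h_2^*} t^2$ satisfies one of $\eqref{genscott:1}$--$\eqref{genscott:3}$. If it satisfies $\eqref{genscott:2}$, then $h^*_1 \le 3\widetilde{h_2^*} + 3 \le 3 h^*_2 + 3$, giving $\eqref{main:2}$. If it satisfies $\eqref{genscott:1}$, then $\widetilde{h_2^*} = 0$, forcing $\tilde s \le 1$, a contradiction with $\tilde s = 2$. If it satisfies $\eqref{genscott:3}$, then $h^*_1 = 7$ and $\widetilde{h_2^*} = 1 \le h^*_2$; if $h^*_2 = 1$ we are in case $\eqref{main:3}$, and if $h^*_2 \ge 2$ then $h^*_1 = 7 \le 3h^*_2 + 3$, giving $\eqref{main:2}$.

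The remaining case, $\tilde{s} = 1$, is the one where the hypothesis ``$\widetilde{P}$ is not a Lawrence prism'' is used. By Theorem~\ref{thm:BN}, a degree-one lattice polytope is either an exceptional simplex or a Lawrence prism; since $\widetilde{P}$ is not a Lawrence prism, it must be an exceptional simplex, i.e.\ $\widetilde{P} \cong \Pyr(\cdots(\Pyr(\conv(\{\orig, 2\eb_1, 2\eb_2\})))\cdots)$. Since the lattice pyramid construction preserves the $h^*$-polynomial, $h^*_{\widetilde{P}}(t) = h^*_{\conv(\{\orig,2\eb_1,2\eb_2\})}(t)$, which one computes (e.g.\ via Proposition~\ref{prop:properties}: the normalized volume is $4$ and the only interior lattice point count relevant here gives $h^*(t) = 1 + 3t$). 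Hence $\widetilde{h_1^*} = 3$, so $h^*_1 = 3$. Then $h^*_1 = 3 \le 3h^*_2 + 3$ holds regardless of the value of $h^*_2 \ge 0$, so $\eqref{main:2}$ is satisfied and we are done.

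The only genuine subtlety, and the step I would be most careful about, is making sure the case analysis on $\tilde s$ is exhaustive and that the passage from $\widetilde{P}$ back to $P$ is sound — specifically, that $\widetilde{h_2^*} \le h^*_2$ is enough to transport each of the three Scott alternatives, and that the ``moreover'' clause about case $\eqref{main:3}$ is not claimed here (it is deferred to Proposition~\ref{prop:last}, so this proof only needs to show one of $\eqref{main:1}$--$\eqref{main:3}$ holds, without characterizing the extremal case). Everything else is bookkeeping: the $h^*$-polynomial of the exceptional simplex is a one-line computation, and the degree-two case is a direct invocation of Treutlein's theorem applied to $\widetilde{P}$ rather than to $P$ itself. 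No new geometric input beyond Theorems~\ref{thm:HKN}, \ref{thm:BN}, and~\ref{thm:generalizedScott} should be needed.
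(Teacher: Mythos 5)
Your proof is correct and follows essentially the same route as the paper: deduce $\tilde{s}\le 2$ from Theorem~\ref{thm:HKN}, split on $\tilde{s}\in\{0,1,2\}$, invoke Theorem~\ref{thm:generalizedScott} for $\tilde s=2$ and Theorem~\ref{thm:BN} (exceptional simplex, $h_1^*=3$) for $\tilde s=1$, and transport via $\widetilde{h_1^*}=h_1^*$, $\widetilde{h_2^*}\le h_2^*$. The paper's version is merely terser in the $\tilde s=2$ case analysis; no substantive difference.
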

\begin{proof}
Since $\widetilde{h_3^*} \leq h_3^*$ by \eqref{eq:h_spanning} and $h_3^*=0$, we have $\widetilde{h_3^*}=0$. Then Theorem \ref{thm:HKN} implies that $\widetilde{h_i^*}=0$ for any $i \geq 3$, in particular $\widetilde{P}$ has degree $\tilde{s} \leq 2$.

If $\tilde{s}=2$, by Theorem \ref{thm:generalizedScott}, we see that the $h^*$-vector of $\widetilde{P}$ satisfies Scott's inequality. By $\widetilde{h_1^*}=h_1^*$ and $0 < \widetilde{h_2^*} \leq h_2^*$, we conclude that $P$ also satisfies Scott's inequality. 
If $\tilde{s}=1$, then, by our assumptions and Theorem~\ref{thm:BN}, $\widetilde{P}$ must be an exceptional simplex. In particular we get $h^*_1=\widetilde{h_1^*}=3$, so Scott's inequality is satisfied for any value of $h^*_2$.
Finally, if $\tilde{s}=0$ then $\widetilde{h^*_1}=0$, so Scott's inequality is satisfied for any value of $h^*_2$, as required.
\end{proof}


\section{The Lawrence prism case}\label{sec:law}
In this section, we consider the missing case in which $P$ is a $d$-dimensional lattice polytope with $h^*_3(P)=0$ and whose spanning polytope $\widetilde{P}$ is a Lawrence prism. To prove this case we first show that, if $\Delta'$ and $\Delta''$ are two full-dimensional empty simplices contained in $P$, then $h^*_2(\Delta')=h^*_2(\Delta'')$. 
This follows from Proposition~\ref{prop:emplex}. Above, we call a simplex $S$ \emph{empty} if it has no lattice points other than its vertices, or equivalently, if it satisfies $h^*_1(S)=0$. Later, with an inclusion-exclusion argument, we show (Proposition~\ref{prop:lll}) strong conditions on the coefficients $h^*_1(P)$ and $h^*_2(P)$, which are enough to finish the proof of the main statement of Theorem~\ref{thm:main}.

For the proof, we recall some notation from the following well-known technique for the computation of the $h^*$-vectors of lattice simplices by associating them to finite abelian groups. 
Let $\Delta$ be a lattice simplex with respect to the lattice $N$ and let $v_0,\ldots,v_d \in N$ be the vertices of $\Delta$. 
We define 
\[
\Lambda_\Delta \coloneqq \left\{(r_0,\ldots,r_d) \in [0,1)^{d+1} : \sum_{i=0}^d r_iv_i \in N, \; \sum_{i=0}^d r_i \in \Z_{\geq 0}\right\}.
\]
We see that $\Lambda_\Delta$ is a finite abelian group with its addition 
$\alpha+\beta=(\{\alpha_0+\beta_0\},\ldots,\{\alpha_d+\beta_d\}) \in [0,1)^{d+1}$ for $\alpha,\beta \in \Lambda_\Delta$, 
where $\{r\}=r - \lfloor r \rfloor$ denotes the fractional part of $r \in \R$. 
Note that ${\bf 0}=(0,\ldots,0) \in \Lambda_\Delta$. We define
\begin{equation}
\label{eq:Lambda_i}
\Lambda_\Delta^{(h)} \coloneqq \left\{(r_0,\ldots,r_d) \in \Lambda_\Delta : \sum_{i=0}^{d} r_i=h \right\} \;\; \text{for} \; h=0,1,\ldots,d.
\end{equation}
Note that $\Lambda_\Delta = \bigsqcup_{i=0}^d \Lambda_\Delta^{(i)}$ and 
the $h^*$-vector of $\Delta$ can be computed by
\begin{equation}
\label{eq:h^*}
h_i^*=|\Lambda_\Delta^{(i)}|\;\text{ for each $i$},
\end{equation}
see \cite[Corollary 3.11]{BR15}.

As in Section~\ref{sec:spanning}, we denote by $N$ the ambient lattice of $P$, while $\widetilde{N} \subseteq N$ is the lattice affinely spanned by the points in $P \cap N$. Let $\eb_1,\ldots,\eb_d$ be a basis for a lattice $\widetilde{N}$. Since $\widetilde{P}$ is a Lawrence prism, we assume that $P \subset D$, where $D$ is the unbounded prism
\[
D:= \conv (\{ \orig, \eb_1 , \ldots , \eb_{d-1}\}) \times \R \eb_d \subset \R^d.
\]
Note that the $(d-1)$-dimensional simplex $\conv (\{ \orig, \eb_1 , \ldots , \eb_{d-1}\})$ over which the $D$ is built may not be unimodular simplex if considered with respect to $N$.

\begin{prop}\label{prop:emplex}
With the notation just introduced, let $\Delta'$ and $\Delta''$ be two $d$-dimensional empty simplices contained in $D$, with vertices on $\widetilde{N}$, but considered as lattice polytopes with respect to the refined lattice $N$. Suppose that $h^*_{3}(\Delta') = h^*_{3}(\Delta'')=0$. Then $h^*_{2}(\Delta') = h^*_{2}(\Delta'')$.
\end{prop}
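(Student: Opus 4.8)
The plan is to exploit the fact that $\Delta'$ and $\Delta''$ both sit in the same unbounded prism $D = S \times \R\eb_d$, where $S = \conv(\{\orig,\eb_1,\dots,\eb_{d-1}\})$, and that both are $d$-dimensional empty simplices with respect to $\widetilde N$ (having $\widetilde N$-vertices), refined to $N$. First I would set up coordinates: write $\widetilde N = \Z\eb_1 \oplus \dots \oplus \Z\eb_d$, so that $N$ is a finite-index overlattice of $\widetilde N$, and record the finite abelian group $G \coloneqq N/\widetilde N$. Because $\Delta'$ and $\Delta''$ are empty with respect to $\widetilde N$ (each is a translate of a Lawrence prism over $S$ with heights differing by $1$, by Theorem~\ref{thm:BN}, or can be put in that form), their $h^*$-polynomials computed over $\widetilde N$ are both $1 + \widetilde h_1^* t$ with $\widetilde h_1^* = 0$, i.e.\ they are unimodularly-in-$\widetilde N$ equivalent to a standard Lawrence prism with heights $(a_0,\dots,a_{d-1})$ where the $a_i$ are $0$ or $1$ — in fact one vertex lies on each of the two "lids" $S \times \{c\}$ and $S \times \{c+1\}$. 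I would make this precise and reduce, via a unimodular transformation of $\widetilde N$ fixing the $\eb_d$-direction, to the case where $\Delta'$ has vertices $\orig, \eb_1,\dots,\eb_{d-1}, \eb_d$ (a standard "upper pyramid"), and $\Delta''$ similarly, possibly after an affine shift in the $\eb_d$ coordinate.

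The key computation is then with the group $\Lambda_\Delta$ of $\eqref{eq:Lambda_i}$ and the formula $\eqref{eq:h^*}$: $h_i^*(\Delta) = |\Lambda_\Delta^{(i)}|$. Because $\Delta'$ is empty over $\widetilde N$, every element of $\Lambda_{\Delta'}$ comes from the refinement $N \supseteq \widetilde N$, and I expect a natural surjection (indeed isomorphism onto its image) $\Lambda_{\Delta'} \hookrightarrow G = N/\widetilde N$ obtained by sending $(r_0,\dots,r_d) \mapsto \sum r_i v_i \bmod \widetilde N$; the grading by $\sum r_i \in \Z_{\geq 0}$ then records, for each class $g \in$ the relevant subgroup $G' \leq G$, a single integer $\subdeg(g) \in \{0,1,\dots,d\}$, and $h_i^*(\Delta') = \#\{g : \subdeg_{\Delta'}(g) = i\}$. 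The point is to show that $\subdeg_{\Delta'}(g)$ and $\subdeg_{\Delta''}(g)$ differ in a controlled way: since both simplices are "slabs of thickness $1$" in the prism $D$ over the same base $S$, and since the base direction contributes the same fractional data to both, the difference $\subdeg_{\Delta''}(g) - \subdeg_{\Delta'}(g)$ depends only on the $\eb_d$-coordinate of the representative and is constant (or takes at most two nearby values) across $g$. Now bring in the hypothesis $h_3^*(\Delta') = h_3^*(\Delta'') = 0$: this forces $\subdeg_{\Delta'}$ and $\subdeg_{\Delta''}$ to omit the value $3$ (and, combined with Theorem~\ref{thm:HKN} applied to the spanning polytopes — or directly to the structure of $\Lambda$ — to omit every value $\geq 3$ as well, once one knows the support of a subdegree function on a subgroup is "interval-like"). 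With both subdegree functions supported in $\{0,1,2\}$, and related by a global shift that must preserve this support, the only possibility is that the shift is $0$, hence $\subdeg_{\Delta'} = \subdeg_{\Delta''}$ identically, and in particular $h_2^*(\Delta') = |\subdeg_{\Delta'}^{-1}(2)| = |\subdeg_{\Delta''}^{-1}(2)| = h_2^*(\Delta'')$.

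The main obstacle I anticipate is the middle step: controlling precisely how $\Lambda_\Delta^{(h)}$ changes when one slides the "slab" $\Delta$ up or down inside the prism $D$, i.e.\ replacing a vertex on the top lid by one on the bottom lid (or conversely). One must verify that the map $\Lambda_{\Delta'} \to \Lambda_{\Delta''}$ induced by matching classes in $G$ is a well-defined bijection — this uses emptiness over $\widetilde N$ to guarantee that the fractional coordinates in the base directions are forced, leaving only the last coordinate free — and then compute the change in the integer $\sum_i r_i$. A clean way to organize this is to write each barycentric-coordinate vector of $\Delta$ explicitly in terms of the heights and the base simplex $S$, observe that the base simplex $S$ (and hence its contribution to $\Lambda$) is the same for $\Delta'$ and $\Delta''$, and that the only genuinely new contributions live in the "pyramid over $S$" directions where a direct two-line calculation gives the shift. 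The constraint $h_3^* = 0$ then rules out any nonzero shift because it would push some subdegree from $\{1,2\}$ up to $\{2,3\}$ or $\{3,4\}$, contradicting the vanishing; I would finish by carefully handling the boundary cases where a class has subdegree exactly $2$ in one simplex and the shift is negative (those must also be excluded, using that subdegrees are $\geq 1$ off the identity together with the interval property of the support).
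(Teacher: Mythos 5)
Your overall strategy coincides with the paper's: both arguments rest on a canonical bijection between $\Lambda_{\Delta'}$ and $\Lambda_{\Delta''}$ obtained by matching classes of $N/\widetilde{N}$ (the paper writes this bijection out explicitly in coordinates, passing through the normal form $\Delta_0=\conv(\{\eb_0,\ldots,\eb_{d-1},\eb_d\})$ in two steps), together with the observation that the bijection changes the grading $\sum_i r_i$ by at most $1$, after which emptiness ($\Lambda^{(1)}=\emptyset$) and the hypothesis $h^*_3=0$ ($\Lambda^{(3)}=\emptyset$) pin down the level-$2$ piece. So the key ideas are present. However, two claims in your endgame are false as stated. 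First, the difference $\subdeg_{\Delta''}(g)-\subdeg_{\Delta'}(g)$ is \emph{not} a global constant: it is governed by a comparison of fractional parts that genuinely varies with $g$ (already for $d=2$ one finds classes with shift $0$ and classes with shift $-1$ for the same pair of simplices). Consequently the intermediate conclusion ``$\subdeg_{\Delta'}=\subdeg_{\Delta''}$ identically'' is not available; the paper is explicit that its bijection need not respect the levels $h>3$. Second, the claim that both subdegree functions are supported in $\{0,1,2\}$, justified by an ``interval property of the support,'' does not hold here: Theorem~\ref{thm:HKN} applies to \emph{spanning} polytopes, whereas $\Delta'$ and $\Delta''$ are considered with respect to the refined lattice $N$ and are typically not spanning --- their $h^*$-vectors satisfy $h^*_1=h^*_3=0$ yet may have $h^*_i>0$ for $i\ge 4$ (compare Lemma~\ref{lem:uncle_johannes_lemma}).

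Fortunately neither claim is needed, and the correct finish is the purely pointwise one you half-state in your last paragraph: an element of $\Lambda^{(2)}_{\Delta'}$ is sent into $\Lambda^{(1)}_{\Delta''}\cup\Lambda^{(2)}_{\Delta''}\cup\Lambda^{(3)}_{\Delta''}=\Lambda^{(2)}_{\Delta''}$, the first set being empty because $\Delta''$ is empty and the third because $h^*_3(\Delta'')=0$, and symmetrically for the inverse map; this yields $|\Lambda^{(2)}_{\Delta'}|=|\Lambda^{(2)}_{\Delta''}|$ with no control whatsoever on higher levels. Note that excluding a downward shift from level $2$ to level $1$ uses $h^*_1(\Delta'')=0$, i.e.\ subdegree $\ge 2$ off the identity, not merely $\ge 1$ as you write. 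Finally, be aware that what you defer as ``the main obstacle'' --- verifying that the class-matching map is well defined, bijective, and shifts the degree by at most $1$ --- is in fact the entire substance of the paper's proof (the explicit maps $\pi$ and $\psi$ and the congruence checks modulo $\widetilde{N}$), so your proposal as written is a correct plan with the right mechanism but still owes its central computation, and its last step must be rewritten along the pointwise lines above.
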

\begin{proof}
Note that any empty simplex $\Delta$ in $D$ having vertices on $\widetilde{N}$ can be easily described. In particular, there exist $0 \leq k \leq d$ and nonnegative integers $c_0,\ldots,c_{d-1}$ 
such that $\Delta$ can be written as 
\[
\Delta=\Delta_k'\coloneqq \conv(\{\eb_0+c_0\eb_d,\eb_1+c_1\eb_d, \ldots, \eb_{d-1}+c_{d-1}\eb_d, \eb_k+(c_k-1)\eb_d\}),
\]
where by $\eb_0$ we denote the origin $\orig$ of $\R^d$. 
In the following, all the polytopes are considered with respect to the refined lattice $N$. We first prove that
\begin{enumerate}[label=(\alph*)]
\item \label{a_lem} $h^*_2(\Delta_k')=h^*_2(\Delta_k)$, where $\Delta_k \coloneqq \conv(\{\eb_0,\ldots,\eb_{d-1},\eb_k+\eb_d\})$;
\end{enumerate}
and we conclude by proving
\begin{enumerate}[label=(\alph*)]
\setcounter{enumi}{1}
\item \label{b_lem} $h^*_2(\Delta_k)=h^*_2(\Delta_0)$
\end{enumerate}
for any $k$. The simplices $\Delta_k'$, $\Delta_k$ and $\Delta_0$ are represented in Figure~\ref{fig:proof_emplex}.

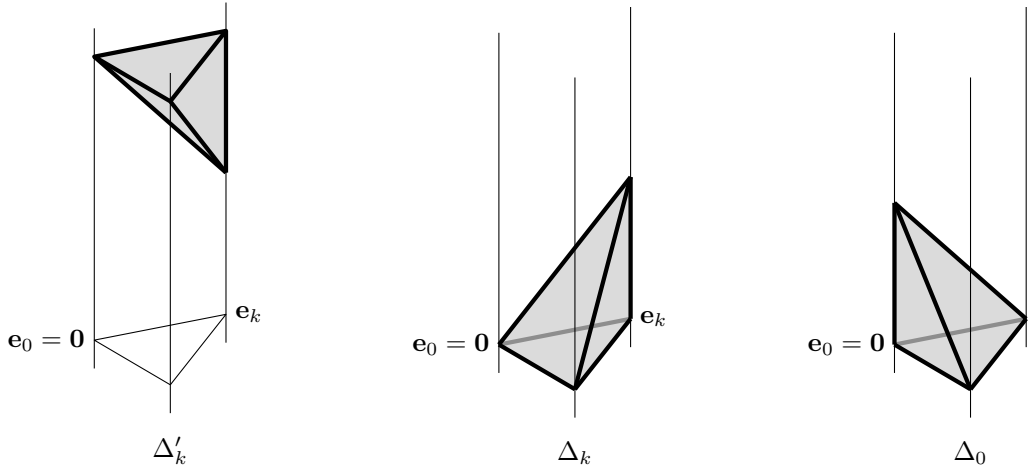
\begin{figure}[h]
\centering
\tdplotsetmaincoords{70}{60}
\begin{tikzpicture}[scale=2,tdplot_main_coords]
	\fill [black!20,opacity=0.7] (0,1,2) -- (0,0,2) -- (0,1,1) -- cycle;
	\draw [ultra thick,black,line join=round] (0,0,2) -- (1,0,2) -- (0,1,2) -- cycle;
	\draw [ultra thick,black,line join=round] (0,0,2) -- (0,1,1);
	\draw [ultra thick,black,line join=round] (1,0,2) -- (0,1,1);
	\draw [ultra thick,black,line join=round] (0,1,2) -- (0,1,1);

	\draw [black,line join=round] (0,0,0) -- (1,0,0) -- (0,1,0) -- cycle;
	\draw [black,line join=round] (0,0,-.2) -- (0,0,2.2);
	\draw [black,line join=round] (1,0,-.2) -- (1,0,2.2);
	\draw [black,line join=round] (0,1,-.2) -- (0,1,2.2);
	\draw[] (0,1,0) node[right]{$\eb_k$};
	\draw[] (0,0,0) node[left]{$\eb_0=\orig$};
	\draw[] (1,0,-.3) node[below]{$\Delta_k'$};
\end{tikzpicture}
\hspace*{1.5cm}
\begin{tikzpicture}[scale=2,tdplot_main_coords]
	\draw [ultra thick,black,line join=round] (0,0,0) -- (0,1,0);
	\draw [black,line join=round] (0,0,0) -- (1,0,0) -- (0,1,0) -- cycle;
	\fill [black!20,opacity=0.7] (0,0,0) -- (1,0,0) -- (0,1,0) -- (0,1,1) -- cycle;

	\draw [ultra thick,black,line join=round] (0,0,0) -- (1,0,0);
	\draw [ultra thick,black,line join=round] (0,1,0) -- (1,0,0);
	\draw [ultra thick,black,line join=round] (0,0,0) -- (0,1,1);
	\draw [ultra thick,black,line join=round] (1,0,0) -- (0,1,1);
	\draw [ultra thick,black,line join=round] (0,1,0) -- (0,1,1);

	\draw [black,line join=round] (0,0,-.2) -- (0,0,2.2);
	\draw [black,line join=round] (1,0,-.2) -- (1,0,2.2);
	\draw [black,line join=round] (0,1,-.2) -- (0,1,2.2);
	\draw[] (0,1,0) node[right]{$\eb_k$};
	\draw[] (0,0,0) node[left]{$\eb_0=\orig$};
	\draw[] (1,0,-.3) node[below]{$\Delta_k$};
\end{tikzpicture}\hspace*{1.5cm}
\begin{tikzpicture}[scale=2,tdplot_main_coords]
	\draw [ultra thick,black,line join=round] (0,0,0) -- (0,1,0);
	\draw [black,line join=round] (0,0,0) -- (1,0,0) -- (0,1,0) -- cycle;
	
	\fill [black!20,opacity=0.7] (0,0,0) -- (1,0,0) -- (0,1,0) -- (0,0,1) -- cycle;
	
	\draw [ultra thick,black,line join=round] (0,0,0) -- (1,0,0);
	\draw [ultra thick,black,line join=round] (0,1,0) -- (1,0,0);
	\draw [ultra thick,black,line join=round] (0,0,0) -- (0,0,1);
	\draw [ultra thick,black,line join=round] (1,0,0) -- (0,0,1);
	\draw [ultra thick,black,line join=round] (0,1,0) -- (0,0,1);

	\draw [black,line join=round] (0,0,-.2) -- (0,0,2.2);
	\draw [black,line join=round] (1,0,-.2) -- (1,0,2.2);
	\draw [black,line join=round] (0,1,-.2) -- (0,1,2.2);
	\draw[] (0,1,0) node[right]{$\phantom{\eb_k}$};
	\draw[] (0,0,0) node[left]{$\eb_0=\orig$};
	\draw[] (1,0,-.3) node[below]{$\Delta_0$};
\end{tikzpicture}
\caption{\label{fig:proof_emplex} The empty simplices $\Delta_k'$, $\Delta_k$ and $\Delta_0$ in the unbounded prism $D$.}
\end{figure}

Let
{\footnotesize
\begin{align*}
&\Lambda_{\Delta_k}=\left\{(r_0,\ldots,r_d) \in [0,1)^{d+1} : \sum_{i=0}^{d-1} r_i\eb_i + r_d(\eb_k+\eb_d) \in N, \; \sum_{i=0}^d r_i \in \Z\right\}, \\
&\Lambda_{\Delta_k'}=\Bigg\{(r_0,\ldots,r_d) \in [0,1)^{d+1} : \sum_{\substack{0 \leq i \leq d-1 \\ i \neq k}} r_i(\eb_i+c_i\eb_d) + 
r_k(\eb_k+(c_k-1)\eb_d)+r_d(\eb_k+c_k\eb_d) \in N, \; \sum_{i=0}^d r_i \in \Z\Bigg\}. 
\end{align*}
}
Let $\Lambda_{\Delta_k}^{(i)}$ and $\Lambda_{\Delta_k'}^{(i)}$ be as in \eqref{eq:Lambda_i} for $i=0,1,\ldots,d$. 

We first show that there exists a bijection $\pi$ between $\Lambda_{\Delta_k}$ and $\Lambda_{\Delta_k'}$ mapping $\Lambda_{\Delta_k}^{(2)}$ to $\Lambda_{\Delta_k'}^{(2)}$. 
Let $(r_0,\ldots,r_d)$ be any element of $\Lambda_{\Delta_k}$, we define $\pi:\Lambda_{\Delta_k} \rightarrow \Lambda_{\Delta_k'}$ by setting
{\footnotesize
\begin{align*}
\pi((r_0,\ldots,r_d)) =& (\pi(r_0),\ldots,\pi(r_d)) \\
\coloneqq & \left(r_0,\ldots,r_{k-1},\left\{\sum_{i=0}^{d-1}r_ic_i+r_d(c_k-1)\right\},r_{k+1},\ldots,r_{d-1},
\left\{ r_k-r_d(c_k-2)-\sum_{i=0}^{d-1}r_ic_i \right\}\right).
\end{align*}
}
We now check that $\pi$ is well-defined. It is straightforward to verify that $\sum_{i=0}^d \pi(r_i)$ becomes an integer when $\sum_{i=0}^d r_i$ is an integer. Let $(r_0,\ldots,r_d) \in \Lambda_{\Delta_k}$. By definition, $\sum_{i=0}^{d-1} r_i\eb_i + r_d(\eb_k+\eb_d)\in N$. Thus,
{\footnotesize
\begin{align*}
&\sum_{\substack{0 \leq i \leq d-1\\ i \neq k}} \pi(r_i)(\eb_i+c_i\eb_d)+\pi(r_k)(\eb_k+(c_k-1)\eb_d)+\pi(r_d)(\eb_k+c_k\eb_d) \\
\equiv & \sum_{\substack{0 \leq i \leq d-1\\ i \neq k}} r_i(\eb_i+c_i\eb_d)+\left(\sum_{i=0}^{d-1}r_ic_i + r_d(c_k-1) \right)(\eb_k+(c_k-1)\eb_d)
+ \left(r_k - r_d(c_k-2) - \sum_{i=0}^{d-1}r_ic_i\right)(\eb_k+c_k\eb_d) \\
= & \sum_{\substack{0 \leq i \leq d-1\\ i \neq k}} r_i\eb_i + r_k\eb_k + r_d(\eb_k+\eb_d) \in N \mod\;\widetilde{N}. 
\end{align*}
}
Similarly, we can also construct the inverse map $\pi^{-1}:\Lambda_{\Delta_k'} \rightarrow \Lambda_{\Delta_k}$. We set
{\footnotesize
\begin{align*}
\pi^{-1}((r_0,\ldots,r_d)) =& (\pi^{-1}(r_0),\ldots,\pi^{-1}(r_d)) \\
\coloneqq & \left(r_0,\ldots,r_{k-1},\left\{ 2r_k+r_d(1-c_k)-\sum_{i=0}^{d-1}r_ic_i\right\},r_{k+1},\ldots,r_{d-1},
\left\{ -r_k+\sum_{i=0}^{d-1}r_ic_i+r_dc_k \right\}\right). 
\end{align*}
}
Also in this case the map is well-defined.
Note that $0 \leq \pi(r_k),\pi(r_d),\pi^{-1}(r'_k),\pi^{-1}(r'_d) < 1$. This means that, for $h \geq 1$,
\[
\pi(\Lambda_{\Delta_k}^{(h)}) \subset \Lambda_{\Delta'_k}^{(h-1)} \cup \Lambda_{\Delta'_k}^{(h)} \cup \Lambda_{\Delta'_k}^{(h+1)} \quad \text{and} \quad \pi^{-1}(\Lambda_{\Delta_k'}^{(h)}) \subset \Lambda_{\Delta_k}^{(h-1)} \cup \Lambda_{\Delta_k}^{(h)} \cup \Lambda_{\Delta_k}^{(h+1)}.
\]
In particular, since by our assumptions $\Lambda_{\Delta_k}^{(1)}=\Lambda_{\Delta_k}^{(3)}=\Lambda_{\Delta_k'}^{(1)}=\Lambda_{\Delta_k'}^{(3)}=\emptyset$, we see that $\pi$ induces a bijection between $\Lambda_{\Delta_k}^{(2)}$ and $\Lambda_{\Delta'_k}^{(2)}$. This proves \ref{a_lem}.

We prove \ref{b_lem} similarly. Let $\psi:\Lambda_{\Delta_0} \rightarrow \Lambda_{\Delta_k}$ be the map defined by 
\[
\psi((r_0,\ldots,r_d)):=\left(\{r_0+r_d\}, r_1,\ldots,r_{k-1},\{r_k-r_d\},r_{k+1},\ldots,r_d \right).
\]
Then $\sum_{i=0}^d \psi(r_i) \in \Z$ if $\sum_{i=0}^d r_i \in \Z$. Let $(r_0,\ldots,r_d) \in \Lambda_{\Delta_0}$. Then $\sum_{i=0}^d r_i\eb_i \in N$. Moreover, 
\begin{align*}
\sum_{i=0}^{d-1} \psi(r_i)\eb_i + \psi(r_d)(\eb_k+\eb_d) 
\equiv \sum_{i=1}^{d-1} r_i\eb_i - r_d\eb_k + r_d(\eb_k+\eb_d) = \sum_{i=1}^d r_i\eb_i \in N \mod\; \widetilde{N}. 
\end{align*}
Hence, $\psi : \Lambda_{\Delta_0} \rightarrow \Lambda_{\Delta_k}$ is well-defined. We can also construct the inverse $\psi^{-1}:\Lambda_{\Delta_k} \rightarrow \Lambda_{\Delta_0}$ by 
\[
\psi^{-1}((r'_0,\ldots,r'_d)):=\left(\left\{r'_0-r'_d\right\}, r'_1,\ldots,r'_{k-1},\{r'_k+r'_d\},r'_{k+1},\ldots,r'_d \right).
\]
Exactly as in case \ref{a_lem}, although $\psi$ might not induce a bijection between $\Lambda_{\Delta_0}^{(h)}$ and $\Lambda_{\Delta_k}^{(h)}$ for $h>3$, a bijection is induced for $h=2$. This proves \ref{b_lem}.
\end{proof}

We can now prove that Theorem~\ref{thm:main} holds also in the Lawrence prism case.
 
\begin{prop}\label{prop:lll}
Let $P$ be a $d$-dimensional lattice polytope whose $h^*$-polynomial is of the form $h^*_P(t)=1+h_1^*t+h_2^*t^2+\cdots $ with $h^*_3=0$. Suppose that its spanning polytope $\widetilde{P}$ is a Lawrence prism. Then $h_2^*$ is divisible by $h^*_1+1$. In particular, the inequalities \emph{\eqref{main:1}--\eqref{main:3}} in Theorem \ref{thm:main} hold. 
\end{prop}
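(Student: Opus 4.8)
The plan is to prove the sharper statement that $h_1^*+1$ divides $h_2^*$; Scott's inequality then follows at once, since if $h_2^*=0$ we are in case \eqref{main:1}, while if $h_2^*\neq 0$ then $h_2^*\geq h_1^*+1$, so that $h_1^*<h_2^*\leq 3h_2^*+3$ and we are in case \eqref{main:2} (case \eqref{main:3} cannot occur, as $8\nmid 1$). The starting point is to make $h_1^*+1$ visible geometrically. Since $\widetilde P$ is a Lawrence prism we may choose coordinates on $\widetilde N$ so that $\widetilde P=\conv(\{\eb_i+t\eb_d:0\leq i\leq d-1,\ t\in\{0,a_i\}\})\subset D$, with $\eb_0\coloneqq\orig$ and heights $a_0,\dots,a_{d-1}\geq 0$; set $m\coloneqq a_0+\cdots+a_{d-1}$. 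A direct computation gives $\Vol_{\widetilde N}(\widetilde P)=m$ and $|\widetilde P\cap\widetilde N|=m+d$, whence $m=\widetilde{h_1^*}+1=h_1^*+1$ by \eqref{eq:h_spanning}. Moreover, writing $g\coloneqq[N:\widetilde N]$, and using both that $P\cap N\subseteq\widetilde N$ (by definition of $\widetilde N$) and that the $\widetilde N$-points of $D$ all lie on the $d$ vertical lines through $\eb_0,\dots,\eb_{d-1}$, one gets $\Vol_N(P)=g\cdot\Vol_{\widetilde N}(\widetilde P)=gm$.

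Next I would triangulate $P$ by the ``staircase'' triangulation of the Lawrence prism, into $m=\Vol_{\widetilde N}(\widetilde P)$ lattice simplices $T_1,\dots,T_m$. Each $T_j$ is a $d$-dimensional simplex contained in $D$ with vertices on $\widetilde N$, hence of the form $\Delta_k'$ of Proposition~\ref{prop:emplex}, unimodular with respect to $\widetilde N$ and therefore of normalized volume $g$ with respect to the refined lattice $N$. Since $P\cap N\subseteq\widetilde N$, we have $T_j\cap N=T_j\cap\widetilde N=\{\text{vertices of }T_j\}$, so each $T_j$ is empty also with respect to $N$; and by the monotonicity of $h^*$-vectors under inclusion of lattice polytopes (a theorem of Stanley), $h_3^*(T_j)\leq h_3^*(P)=0$, so $h_3^*(T_j)=0$ for every $j$. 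Proposition~\ref{prop:emplex} now applies to any pair $T_j,T_{j'}$ and shows that all the $T_j$ share one and the same second coefficient $h_2^*(T_j)=:c$.

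The last step is an inclusion--exclusion. From $\mathbf 1_P=\sum_{\emptyset\neq S}(-1)^{|S|+1}\mathbf 1_{\tau_S}$, where $\tau_S\coloneqq\bigcap_{j\in S}T_j$ runs over the faces of the triangulation, one obtains, after clearing the denominator $(1-t)^{d+1}$, the identity $h_P^*(t)=\sum_{j=1}^m h_{T_j}^*(t)+\sum_{|S|\geq 2}(-1)^{|S|+1}(1-t)^{\,d-\dim\tau_S}\,h_{\tau_S}^*(t)$. Comparing the coefficient of $t^2$ (and using that every $\tau_S$ is $N$-empty, so $h^*_1(\tau_S)=0$) yields $h_2^*(P)=mc+E$, where $E$ is a signed sum, indexed by the interior faces $\tau$ of the triangulation of dimension $<d$, of the integers $\binom{d-\dim\tau}{2}+h_2^*(\tau)$. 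The crux, and the step I expect to be the main obstacle, is to show that $m$ divides $E$: the interior faces of the staircase triangulation are again empty lattice simplices of a uniform combinatorial type, lying in $D$ with vertices on $\widetilde N$, so a level-counting argument parallel to the one proving Proposition~\ref{prop:emplex} should show that in each fixed dimension they share a common second coefficient and occur with multiplicities summing to a multiple of $m$, forcing $E\equiv 0\pmod m$. Once this is in place, $h_2^*(P)=mc=(h_1^*+1)c$ is divisible by $h_1^*+1$, which is exactly the claim.
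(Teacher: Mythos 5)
Your setup is exactly the paper's: the staircase decomposition of the Lawrence prism into $m=\sum a_i=h_1^*+1$ empty $N$-simplices, the use of monotonicity to get $h_3^*(T_j)=0$, and Proposition~\ref{prop:emplex} to get a common value $c=h_2^*(T_j)$. The gap is the step you yourself flag as the main obstacle: you never prove that the correction term $E$ in your one-shot inclusion--exclusion over the whole nerve of the triangulation is divisible by $m$. The heuristic you offer (that interior faces of each dimension ``occur with multiplicities summing to a multiple of $m$'') is not the right mechanism and would not go through as stated: the contribution of a codimension-$e$ interior face $\tau$ to the $t^2$-coefficient is $\binom{e}{2}-e\,h_1^*(\tau)+h_2^*(\tau)$ weighted by a signed count of the subsets $S$ with $\tau_S=\tau$, and for $e\geq 2$ the binomial term is nonzero, so you would need genuine sign cancellation across the nerve (non-adjacent staircase simplices still meet in nonempty lower-dimensional faces), not just a count of faces modulo $m$. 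In fact $E=0$, but this does not follow from face multiplicities.

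The paper sidesteps all of this by peeling off one simplex at a time rather than triangulating in one shot. With $\Delta$ the top staircase simplex, $P'$ the closure of $P\setminus\Delta$ and $S=P'\cap\Delta$ the single $(d-1)$-dimensional common facet, the two-term inclusion--exclusion gives
\begin{align*}
h_1^*(P)&=h_1^*(P')+h_1^*(\Delta)-h_1^*(S)+1,\\
h_2^*(P)&=h_2^*(P')+h_2^*(\Delta)-h_2^*(S)+h_1^*(S),\\
h_3^*(P)&=h_3^*(P')+h_3^*(\Delta)-h_3^*(S)+h_2^*(S).
\end{align*}
Monotonicity forces $h_3^*(P')=h_3^*(\Delta)=h_3^*(S)=0$, and then the third line (not monotonicity alone) yields $h_2^*(S)=0$; emptiness gives $h_1^*(\Delta)=h_1^*(S)=0$. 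Hence each peeling step increases $h_1^*$ by exactly $1$ and $h_2^*$ by exactly $c$, and iterating $m-1$ times gives $h_1^*=m-1$ and $h_2^*=mc$ with no error term to control. If you replace your global nerve computation by this sequential argument, your proof closes; as written, the divisibility of $E$ by $m$ is an unproved claim on which the whole conclusion rests.
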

\begin{proof}Since $\widetilde{P}$ is a Lawrence prism, $\widetilde{P}$ is of the form 
\[
\widetilde{P}=\conv(\{\eb_i,\eb_i+a_i\eb_d : 0 \leq i \leq d-1\}) \subset \R^d,
\]
where $a_0,a_1,\ldots,a_{d-1}$ are nonnegative integers and $\eb_0$ denotes the origin of $\R^d$. 
We regard $\widetilde{P}$ as a lattice polytope with respect to the sublattice $\widetilde{N} \subseteq N$, affinely spanned by the points of $P \cap N$. We know that the $h^*$-polynomial $h^*_{\widetilde{P}}$ of $\widetilde{P}$ has degree at most one, i.e.  $h^*_{\widetilde{P}} = 1 + \widetilde{h_1^*}t$. Note that since $\widetilde{h_1^*}=h_1^*$ the degree zero case is trivial, so we can assume $h^*_1 >0$.

Fix nonnegative integers $a_0,a_1,\ldots,a_{d-1}$. 
We define the lattice simplex $\Delta_{i,j}$ for $0 \leq i \leq d-1$, $1 \leq j \leq a_i$ as the lattice polytope
\[
\Delta_{i,j}\coloneqq\conv(\{\eb_0, \ldots,\eb_{i-1},\eb_i+j\eb_d,\eb_i+(j-1)\eb_d,\eb_{i+1}+a_{i+1}\eb_d,\ldots,\eb_{d-1}+a_{d-1}\eb_d\}),
\]
considered with respect to the lattice $N$. 
Then $\Delta_{i,j} \subset P$ for any $i,j$. Hence, by monotonicity \cite[Theorem 3.3]{Sta93}, we have $h_3^*(\Delta_{i,j})=0$. Moreover, $\Delta_{i,j}$ is an empty simplex, equivalently, $h_1^*(\Delta_{i,j})=0$. 
Therefore, it follows from Proposition~\ref{prop:emplex} that all $h_2^*(\Delta_{i,j})$'s are equal for any $i,j$. Let $c \coloneqq h_2^*(\Delta_{i,j})$. 

Let $\ell=\min\{i:a_i>0\}$ for fixed $a_0,a_1,\ldots,a_{d-1}$. Let $P'$ be the lattice polytope
\[
P' \coloneqq \conv(\{\eb_i,\eb_i+a'_i\eb_d : 0 \leq i \leq d-1\}) \subset P,
\]
considered with respect to $N$, where $a'_i \coloneqq a_i$ for $i \neq \ell$ and $a'_\ell \coloneqq a_\ell-1$. Equivalently, $P'$ is the lattice polytope given as (the closure of) $P \setminus \Delta_{\ell,a_\ell}$. For simplicity we introduce the notation $\Delta \coloneqq \Delta_{\ell,a_\ell}$. Let $S$ be the $(d-1)$-dimensional simplex given as the intersection $P' \cap \Delta$. Then, from the inclusion-exclusion formula $\ehr_P(k)=\ehr_{P'}(k)+\ehr_{\Delta}(k)-\ehr_{S}(k)$, it follows that 
\[
\sum_{i=0}^d h_i^*t^i = \sum_{i=0}^d h^*_i(P')t^i+\sum_{i=0}^d h^*_i(\Delta)t^i-(1-t)\sum_{i=0}^{d-1} h^*_i(S)t^i.
\]
In particular we obtain the following: 
\begin{align*}
h_1^*=&h^*_1(P')+h^*_1(\Delta)-h^*_1(S)+1,\\
h_2^*=&h^*_2(P')+h^*_2(\Delta)-h^*_2(S)+h^*_1(S),\\
h_3^*=&h^*_3(P')+h^*_3(\Delta)-h^*_3(S)+h^*_2(S).
\end{align*}
By $h_3^*=0$ and by monotonicity, we obtain that $h^*_3(P')=h^*_3(\Delta)=h^*_3(S)=h^*_2(S)=0$. 
Moreover, we also know $h^*_1(\Delta)=h^*_1(S)=0$. In addition, $h^*_2(\Delta)=c$ by definition. Therefore,
\[
h_1^*=h^*_1(P')+1 \;\;\text{ and }\;\; h_2^*=h^*_2(\Delta)+c.
\]

Now, we iterate this construction. More precisely, one can replace $P$ by $P'$ and perform the same computation for this new starting polytope. We can do this until $P'$ becomes $\Delta_{d',1}$, where $d'=\max\{i:a_i>0\}$. 
This process stops after $b-1$ iterations, where $b=\sum_{i=0}^{d-1}a_i$. 
Hence, we eventually obtain the following:
\[
h_1^*=b-1 \;\; \text{ and }\;\; h_2^*=bc.
\]
This says that $h_2^*$ is divisible by $h_1^*+1$, as desired. 
\end{proof}

\section{Necessary condition for $\eqref{main:3}$}\label{sec:special}
Finally, we study necessary conditions for condition $\eqref{main:3}$ of Theorem~\ref{thm:main}.

\begin{prop}
\label{prop:last}
Let $P$ be a $d$-dimensional lattice polytope with $h^*$-polynomial $1+7t+t^2+h^*_4t^4 + \cdots $. 
Then its spanning polytope $\widetilde{P}$ is unimodular equivalent to the polytope obtained by the $(d-2)$-fold iterations of the lattice pyramid construction over the triangle $\conv(\{\orig,3\eb_1,3\eb_2\})$.
\end{prop}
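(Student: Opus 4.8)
The plan is to combine the reduction used in Proposition~\ref{prop:deduce} with the degree-$\le 1$ classification of Theorem~\ref{thm:BN} and the degree-$\le 2$ classification of Theorem~\ref{thm:generalizedScott}, the only genuinely new ingredient being a short case analysis that invokes Proposition~\ref{prop:lll}. Since the $h^*$-polynomial of $P$ is $1+7t+t^2+h^*_4t^4+\cdots$, we have $h^*_3(P)=0$; hence by \eqref{eq:h_spanning} the spanning polytope $\widetilde P$ satisfies $\widetilde{h_1^*}=h_1^*=7$ and $\widetilde{h_3^*}\le h^*_3=0$, and Theorem~\ref{thm:HKN} then forces $\widetilde{h_i^*}=0$ for every $i\ge 3$, so $\tilde s=\deg\widetilde P\le 2$. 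The task is to pin $\widetilde P$ down exactly, so the first step is to rule out $\tilde s\le 1$.

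If $\tilde s=0$ then $\widetilde{h_1^*}=0\ne 7$, a contradiction. If $\tilde s=1$, then by Theorem~\ref{thm:BN} the polytope $\widetilde P$ is either an exceptional simplex or a Lawrence prism. An exceptional simplex has $h^*$-polynomial $1+3t$ — the base triangle $\conv(\{\orig,2\eb_1,2\eb_2\})$ does, and the lattice pyramid construction preserves the $h^*$-polynomial — which is incompatible with $\widetilde{h_1^*}=7$. And if $\widetilde P$ were a Lawrence prism, then Proposition~\ref{prop:lll}, whose hypotheses (spanning polytope a Lawrence prism and $h^*_3(P)=0$) hold here, would force $h^*_1+1=8$ to divide $h^*_2=1$, which is absurd. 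Hence $\tilde s=2$.

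It remains to identify $\widetilde P$. Since $\tilde s=2$, Theorem~\ref{thm:HKN} gives $\widetilde{h_2^*}\ge 1$, while \eqref{eq:h_spanning} gives $\widetilde{h_2^*}\le h^*_2=1$; thus $h^*_{\widetilde P}(t)=1+7t+t^2$. As $\widetilde P$ has degree at most two, Theorem~\ref{thm:generalizedScott} applies, and its $h^*$-polynomial lies in case~\eqref{genscott:3}; therefore, by the ``moreover'' part of that theorem, $\widetilde P$ is unimodularly equivalent to a lattice simplex obtained by iterated lattice pyramid constructions over $\conv(\{\orig,3\eb_1,3\eb_2\})$. Finally, $\dim\widetilde P=\dim P=d$ and each pyramid step raises the dimension by one starting from the $2$-dimensional triangle, so $\widetilde P$ must be precisely the $(d-2)$-fold lattice pyramid over $\conv(\{\orig,3\eb_1,3\eb_2\})$, which is the assertion.

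Given the results of the previous sections there is no serious obstacle here; the only point that truly uses them is the exclusion of the Lawrence prism sub-case of $\tilde s=1$, which rests on Proposition~\ref{prop:lll}, while everything else is bookkeeping with \eqref{eq:h_spanning}, Theorem~\ref{thm:HKN}, and the two classification theorems.
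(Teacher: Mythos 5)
Your proof is correct and takes essentially the same route as the paper's: both reduce to the spanning polytope via \eqref{eq:h_spanning} and Theorem~\ref{thm:HKN}, exclude the degree $\le 1$ case by ruling out the exceptional simplex ($h_1^*=3\ne 7$) and then invoking Proposition~\ref{prop:lll} to get the contradiction $8\mid 1$ in the Lawrence prism case, and finally apply the ``moreover'' part of Theorem~\ref{thm:generalizedScott} to $1+7t+t^2$. The only cosmetic difference is that you split $\tilde s=0$ and $\tilde s=1$ where the paper treats them together as the case $\widetilde{h_2^*}=0$.
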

\begin{proof}
By Theorem~\ref{thm:HKN}, the $h^*$-polynomial of $\widetilde{P}$ is of the form ${h}^*_{\widetilde{P}} = 1 + 7 t + \widetilde{h_2^*}t^2$. 
Suppose that $\widetilde{h_2^*}=0$, then since $\widetilde{P}$ cannot be an exceptional simplex, it must be a Lawrence prism. Then $h_2^*$ is divisible by $h_1^*+1$ by Proposition \ref{prop:lll}. In particular, $(h_1^*,h_2^*)=(7,1)$ never happens. 
Since $0<\widetilde{h_2^*} \leq h_2^*=1$, we assume that $\widetilde{h_2^*}=1$. By Theorem~\ref{thm:generalizedScott}, $\widetilde{P}$ is unimodularly equivalent to a polytope obtained via multiple lattice pyramids over $\conv(\{\orig, 3\eb_1',3\eb_2'\})$, where $\eb_1',\eb_2'$ are two elements of a basis for the affine sublattice $\widetilde{N}$. 
\end{proof} 

This completes the proof of Theorem~\ref{thm:main}. 

We conclude by noting that polytopes satisfying condition $\eqref{main:3}$ of Theorem~\ref{thm:main} are not necessarily obtained via a join.

\begin{ex}\label{ex:non_join}
Let $v_0=\orig$, $v_1=3\eb_1,v_2=3\eb_2$ and $v_i=\eb_i$ for $i=3,\ldots,8$, where $\eb_i$ is a basis for $\Z^8$, and let 
$\Delta=\conv(\{v_i:i=0,1,\ldots,8\})$ be a lattice simplex 
with respect to the lattice $N=\Z\cdot(1/2,1/2,1/2,1/2,1/2,1/2,1/2,1/2)+\Z^8$. Then 
\[
\Lambda_\Delta=\langle (1/3,2/3,0,0,\ldots,0), (1/3,0,2/3,0,\ldots,0) \rangle \oplus 
\langle (0,1/2,1/2,\ldots,1/2) \rangle,
\]
where $\langle \alpha,\ldots,\beta \rangle$ denotes the abelian group generated by $\alpha,\ldots,\beta$. 
Thus, $h_\Delta^*(t)=1+7t+t^2+6t^4+3t^5$ by \eqref{eq:h^*}. 
Notice that $\Delta$ cannot be a join of some two lower-dimensional lattice polytopes by Lemma \ref{lem:join}. 
\end{ex}


\bibliographystyle{plain}

\end{document}